\DeclareMathOperator{\Aut}{Aut}	
\DeclareMathOperator*{\dlim}{\underrightarrow{\lim}}
\DeclareMathOperator{\End}{End}	
\DeclareMathOperator{\Hom}{Hom}	
\DeclareMathOperator{\id}{id}	
\DeclareMathOperator*{\ilim}{\underleftarrow{\lim}}	
\DeclareMathOperator{\Perf}{Perf}
\DeclareMathOperator{\Spa}{Spa}
\DeclareMathOperator{\Sym}{Sym}	
\newcommand{\LS}{\underline{\mathbb B^+_\dR\operatorname{-LocSys}}}
\newcommand{\ls}{\mathbb B^+_\dR\operatorname{-LocSys}}
\newcommand{\tMIC}{\underline{\operatorname{\text{$t$}-MIC}}}
\newcommand{\tmic}{\operatorname{\text{$t$}-MIC}}
\DeclareMathOperator{\Fil}{Fil}
\DeclareMathOperator{\gr}{gr}
\def\id{\mathrm{id}}
\newcommand{\OO}{\mathcal{O}}
\newcommand{\BB}{\mathbb{B}}
\DeclareMathOperator{\A}{A}
\DeclareMathOperator{\B}{B}
\newcommand{\bdr}{{\BB_\dR^+}}
\newcommand{\obdr}{\OO\BB_\dR}
\def \dR {\text{\normalfont{dR}}}
\def \d {\text{\normalfont{d}}}
\def \et {\text{\normalfont{\'et}}}
\def \Fil {\text{\normalfont{Fil}}}	
\def \gr {\text{\normalfont{gr}}}	
\def \proet {\text{\normalfont{pro\'et}}}
\def \sm {\text{\normalfont{sm}}}	
\def \adj {\text{\normalfont{adj}}}
\title{A stacky nilpotent $p$-adic Riemann-Hilbert correspondence}
\author{Yudong Liu, Chenglong Ma, Xiecheng Nie, Xiaoyu Qu}
\date{}
\begin{document}
\maketitle
\begin{abstract}
 Let $\overline X$ be a smooth rigid variety over $C=\mathbb C_p$ admitting a lift $X$ over $B_\dR^+$. In this paper, we use the stacky language to prove a nilpotent $p$-adic Riemann-Hilbert correspondence. After introducing the moduli stack of $\mathbb B^+_\dR$-local systems and $t$-connections, we prove that there is an equivalence of the nilpotent locus of the two stacks: $RH^0:\LS^0_X\to \tMIC^0_X$, where $\LS^0_X$ is the stack of nilpotent $\mathbb B^+_\dR$-local systems on $\overline X_{1,v}$ and $\tMIC^0_X$ is the stack of $\OO_X$-bundles with integrable $t$-connection on $X_\et$. 
\end{abstract}
\setcounter{tocdepth}{1}
\tableofcontents
\newtheorem{thm}{Theorem}[section]
\newtheorem{lem}[thm]{Lemma}
\newtheorem{prop}[thm]{Proposition}
\newtheorem{cor}[thm]{Corollary}
\theoremstyle{definition}\newtheorem{defn}[thm]{Definition}
\theoremstyle{definition}\newtheorem{defns}[thm]{Definitions}
\theoremstyle{remark}\newtheorem*{rmk}{Remark}

\numberwithin{equation}{section}

\section{Introduction}
\subsection{Overview}
The classical Riemann-Hilbert correspondence states that for a smooth projective variety $X/\mathbb C$, there exists a categorical equivalence 
\begin{align*}
\{\mathbb{C}\text{-local systems on } X_{\et}\} &\longleftrightarrow \{\text{flat connections on }X\}\\
\mathbb{L} &\mapsto (\mathbb{L} \otimes_{\mathbb{C} }\mathcal{O} _X, \id_{\mathbb{L} }\otimes d)\\
D^{\nabla=0} &\mapsfrom (D,\nabla)
\end{align*}

A $p$-adic analogue of such a correspondence is sought-after for arithmetic purposes. In this setting, one considers e.g. a smooth adic space $X/\Spa(k,\mathcal O_k)$ for a discrete complete nonarchimedean extension $k/\mathbb Q_p$ with perfect residue.

Scholze makes use of filtered modules with integrable connection to develop his $p$-adic Hodge theory \cite{S2} and shows that they correspond to deRham
$\mathbb B^+_\dR$-local systems. Liu and Zhu in \cite{LZ} improve his method and obtain a tensor functor
\begin{align*}
\{\mathbb Q_p\text{-local systems on }X_{\et}\} \to{}& \{\text{filtered Galois $\mathcal O_X\widehat\otimes B_\dR$-modules with}\\ 
 &\text{integral connection on }X\}\\
\mathbb{L} \mapsto{}& (\nu_*(\nu^*\mathbb{L} \otimes \mathcal O \mathbb B_\dR), \id_{\mathbb{L} }\otimes d)
\end{align*}

Note that in those cases the Higgs field $\gr^0(\nabla)$ is automatically nilpotent. 

Recently, on a smooth adic space $X$ over $B^+_\dR$, Yu shows that the $\mathbb B^+_\dR$-local systems and $t$-connections both forms small v-stacks in the sense of \cite{S3}, and proves a sheafified correspondence for them in \cite{Yu}.
We then aim to show a stacky nilpotent correspondence in his setting.

\subsection{Main Results}
Let $\overline X$ be a smooth rigid variety over $C=\mathbb C_p$ admitting a lift $X$ over $B_\dR^+$. We consider the correspondence between 
$\BB_{\dR,\overline X}^+$-local systems with nilpotent Sen operator, and integrable t-connections with nilpotent Higgs field on $X$. We will call the objects of both sides nilpotent for short.
Our main result for this problem is
\begin{thm}\label{main}
There is an equivalence of stacks $RH^0:\LS^0_X\to \tMIC^0_X$.
\end{thm}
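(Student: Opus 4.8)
The plan is to realize $RH^0$ as the restriction to the nilpotent loci of Yu's global Riemann--Hilbert morphism, and then to prove the asserted equivalence by descent to an explicit Sen-theoretic computation on affinoids carrying a toric chart. By \cite{Yu} there is a morphism of stacks $RH\colon\LS_X\to\tMIC_X$ sending a $\bdr$-local system $\mathbb L$ on $\overline X_{1,v}$ to $\bigl(\nu_*(\mathbb L\otimes_{\bdr}\OO\BB_\dR^+),\ \id\otimes d\bigr)$ on $X_\et$, where $\nu$ is the projection from the v-site to the étale site; this assignment is functorial in the test object. I would first verify that $RH$ carries $\LS^0_X$ into $\tMIC^0_X$, using the canonical identification of $\gr^0(\nabla)$ with the linearization of the Sen operator of $\mathbb L$: nilpotence of the Sen operator is then literally nilpotence of the Higgs field $\gr^0(\nabla)$. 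This produces a morphism of stacks $RH^0\colon\LS^0_X\to\tMIC^0_X$, automatically compatible with base change since it is a restriction of $RH$.

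\textbf{Step 2 (reduction to a toric chart).} Because $\LS^0_X$ and $\tMIC^0_X$ are stacks and $RH^0$ is a morphism of stacks, it suffices to prove that $RH^0$ induces an equivalence of groupoids after restricting along an étale cover of $\overline X$ (and, where needed, a v-cover of the test objects). I therefore reduce to the case $\overline X=\Spa(A,A^+)$ affinoid admitting an étale map to a torus $\mathbb T^n$; then all bundles in sight are free, local systems and $t$-connections glue along the resulting covers, and the relevant cohomology is computed by an explicit Koszul (Sen) complex. The cocycle compatibility for this gluing is part of the sheafified correspondence of \cite{Yu}.

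\textbf{Step 3 (local equivalence).} On such a chart I would factor the comparison through the arithmetic Sen module: a nilpotent $\bdr$-local system on $\overline X_{1,v}$ amounts to a finite projective module over the period ring of the toric tower equipped with a topologically nilpotent Sen operator $\phi$, and a decompletion argument of Sen / Cherbonnier--Colmez type in the relative $\bdr$-setting descends this to a finite projective $\OO_X$-module with integrable $t$-connection $\nabla$ whose Higgs field is $\phi\bmod t$. One then checks that this chain of equivalences agrees with $RH^0$ on objects and morphisms. Full faithfulness comes down to $\nu_*\OO\BB_\dR^+=\OO_X$ together with acyclicity of the Sen--Koszul complex; essential surjectivity is the assertion that a nilpotent integrable $t$-connection integrates to a local system, which on the toric chart is produced by the convergent parallel-transport/exponential formula --- here nilpotence is exactly what guarantees convergence, so no $p$-adic smallness hypothesis is needed.

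\textbf{Step 4 (main obstacle).} The delicate point is carrying out Step 3 in families: one must show the decompletion and the nilpotent exponential behave uniformly over an arbitrary perfectoid test object $S$, i.e.\ that the Sen--Koszul complexes commute with arbitrary base change and that the exponential converges integrally in families, so that the local statement is an equivalence of the stack-valued functors and not merely of their fibers at geometric points. A secondary difficulty is tracking the two topologies through $\nu$ during the gluing, for which the full strength of \cite{Yu} is required.
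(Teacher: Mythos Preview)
Your Steps~2--4 are essentially the paper's strategy: reduce to a toric chart, pass through the decompleted $\Gamma$-module (Sen module) to produce the $t$-connection, and build the inverse by integrating a nilpotent connection via the exponential/parallel-transport formula on the polynomial ring in the $V_i$. The paper does exactly this (Lemma~\ref{analytic}, Corollary~\ref{analytic:nilpotent}, Lemma~\ref{pushforward}, Theorem~\ref{LS:MIC}, Lemma~\ref{nilpotent:connection}, Theorem~\ref{MIC:LS}), and the base-change compatibility you flag as the main obstacle in Step~4 is handled rather cheaply in the paper by the explicit base-change formula for the period sheaf (Proposition~\ref{ext}).

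Step~1, however, contains a genuine gap. First, the period sheaf is wrong: the functor must be $\mathbb L\mapsto\nu_*(\mathbb L\otimes_{\bdr}\Fil^0\obdr)$, not $\mathbb L\mapsto\nu_*(\mathbb L\otimes_{\bdr}\obdr^+)$. The derivation on $\obdr^+$ is an ordinary connection, so your functor does not even land in $\tMIC_X$. The paper uses $\Fil^0\obdr$ because its derivation is a $t$-connection, and because on the toric tower it is $\bdr\langle V_1,\dots,V_d\rangle$ with $V_i=t^{-1}\log([T_i^\flat]/T_i)$ and $\gamma_j(V_i)=V_i+\delta_{ij}$ (Corollary~\ref{obdr}); this clean $\Gamma$-action is what drives the Galois-cohomology computation in Lemma~\ref{pushforward}. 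With $\obdr^+\cong\bdr[[U_1,\dots,U_d]]$ and $U_i=T_i-[T_i^\flat]$ the $\Gamma$-action is not of this shape and the argument does not go through.

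Second, you cannot obtain $RH^0$ by restricting a global $RH:\LS_X\to\tMIC_X$ already supplied by \cite{Yu}. Yu proves that both sides are small v-stacks, but the paper must \emph{construct} $RH^0$ on the nilpotent locus: the statement that $\nu_*(\mathbb L\otimes\Fil^0\obdr)$ is an $\OO_X$-bundle is proved only for nilpotent $\mathbb L$ (Theorem~\ref{LS:MIC}), and the proof uses nilpotence in an essential way --- it is precisely what allows the decompleted $\Gamma$-module to descend all the way from $\overline A_\infty$ down to a finite projective $\overline A$-module with unipotent $\Gamma$-action (Corollary~\ref{analytic:nilpotent}). For general $\mathbb L$ this descent stops at some $\overline A_N$, and the pushforward need not be locally free. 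So the construction of $RH^0$ is itself part of the work, not a restriction of something pre-existing.
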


Note that our work may easily adapt to the arithmetic case. We may regard it as a generalization of \cite{LZ}. It also answers the question \cite[Remark 3.1]{LZ} asking for the geometric sense of $\mathcal X^+$ in their words.

This work is based on Yu's detailed study of adic spaces over $B^+_\dR$. And our approach is similar to \cite{LZ}.

\begin{cor}\label{cor:whole correspondence for P}
For the projective adic space $\mathbb P^d$ over $\mathrm B_\dR^+$ there is an equivalence of the whole moduli stacks
\[\LS_{\mathbb P^d}\simeq \tMIC_{\mathbb P^d}.\]
\end{cor}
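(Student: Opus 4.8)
The plan is to show that over $\mathbb P^d$ the nilpotence conditions are automatic, so that the natural inclusions of substacks $\LS^0_{\mathbb P^d}\hookrightarrow\LS_{\mathbb P^d}$ and $\tMIC^0_{\mathbb P^d}\hookrightarrow\tMIC_{\mathbb P^d}$ are isomorphisms; the corollary then drops out of Theorem~\ref{main} by composing with $RH^0$. This is the direct analogue of the classical fact that $\mathbb P^d_{\mathbb C}$, being simply connected and carrying no nonzero global $1$-forms, makes the Riemann--Hilbert correspondence trivial on both sides. The single geometric input I would isolate is the vanishing $H^0(\mathbb P^d,\Sym^i\Omega^1_{\mathbb P^d/\mathrm B_\dR^+})=0$ for all $i\ge 1$, together with its stability under base change along an arbitrary (affinoid perfectoid) test object $S$ over $C$: the Euler sequence realizes $\Omega^1_{\mathbb P^d}$ as a subsheaf of $\OO(-1)^{\oplus(d+1)}$, hence $\Sym^i\Omega^1_{\mathbb P^d}$ as a subsheaf of a finite direct sum of copies of $\OO(-i)$, which has no nonzero global sections; flatness of $\Sym^i\Omega^1_{\mathbb P^d}$ over the base keeps this true after pullback.

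On the $\tmic$ side, given $(E,\nabla)\in\tMIC_{\mathbb P^d}(S)$ I would pass to the associated Higgs field $\theta$ on the special fibre: an $\OO$-linear map $\overline E\to\overline E\otimes\Omega^1\{-1\}$ with $\theta\wedge\theta=0$. Its characteristic polynomial $\det(T\cdot\id-\theta)$ has coefficients in $\bigoplus_{i\ge 1}H^0(\overline{\mathbb P^d}_{\overline S},\Sym^i\Omega^1\{-i\})$, which vanishes by the input above, so $\theta$ is nilpotent and $(E,\nabla)$ already lies in $\tMIC^0_{\mathbb P^d}(S)$. On the $\ls$ side I would argue identically, since the geometric Sen operator of $\mathbb M\in\LS_{\mathbb P^d}(S)$ is again an $\OO$-linear field valued in $\Omega^1\{-1\}$ on the special fibre, so its characteristic polynomial is forced to be $T^{\rank\mathbb M}$; alternatively, I could invoke Yu's sheafified correspondence \cite{Yu}, which locally matches the Sen operator of $\mathbb M$ with the Higgs field of the corresponding $t$-connection, and reduce to the $\tmic$ computation just made. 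Either route makes both substack inclusions isomorphisms.

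I expect the main obstacle to be bookkeeping rather than geometry. One must pin down that the Sen operator and the Higgs field really are the $\OO$-linear, $\Omega^1\{-1\}$-valued fields whose nilpotence defines $\LS^0$ and $\tMIC^0$ (so that the characteristic-polynomial argument is legitimate, and so that the two notions of nilpotence correspond under $RH^0$ up to the relevant Tate twists); one must check that the cohomology vanishing and the flatness of $\Sym^i\Omega^1_{\mathbb P^d}$ are well behaved over the non-Noetherian base $\mathrm B_\dR^+$ and over all test objects $S$; and one must verify that $\LS^0_{\mathbb P^d}$ and $\tMIC^0_{\mathbb P^d}$ sit inside the ambient stacks as \emph{full} substacks, so that objectwise factorization through the nilpotent locus upgrades to an equivalence of the full stacks rather than merely to essential surjectivity on objects.
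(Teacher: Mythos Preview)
Your proposal is correct and is essentially the same argument as the paper's, just written out in full: the paper's one-line proof ``For $P$ the Hitchin base $A_P=0$'' is precisely the statement that $\bigoplus_{k\ge 1}H^0(\overline{\mathbb P^d}_S,\Sym^k\Omega^1_{\mathbb P^d}(-1))=0$ for every test object $S$, which forces both Hitchin maps to land at $0$ and hence identifies the nilpotent loci with the full stacks. Your Euler-sequence computation and base-change remark are exactly what is needed to justify $A_{\mathbb P^d}=0$, and the bookkeeping concerns you raise at the end are absorbed by the paper's definition of $\LS^0$ and $\tMIC^0$ as the fibres of the Hitchin maps at $0$.
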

\begin{proof}
For $P$ the Hitchin base $A_P=0$.
\end{proof}

\begin{cor}
    
\end{cor}

\begin{rmk}
Further, in a forthcoming work \cite{lmnqw} we construct period sheaves with more structures and generalize our result to small objects:

There is an equivalence of stacks of Hitchin-small objects
\[RH^\sm:\LS^\sm_X\to \tMIC^\sm_X.\]

By mod t we refind Anschütz, Heuer and Le Bras's small Simpson correspondence \cite{h}. Our method provides a direct expression of their correspondence.    
\end{rmk}

\subsection{Relation to other works}
There are traditionally two ways to produce a $p$-adic Riemann-Hilbert correspondence. The first one is the approach by Liu-Zhu \cite{LZ}, by constructing the 'correct' version of period sheaves in various situations and writing down two sides of the functors explicitly and directly proving equivalences. The other approach is given by work of Heuer, and Anschütz-Heuer-Le Bras \cite{h}, who used stacky approach and apply constructions of the Hodge-Tate stack.

Our method can be seen as a mix of both. We use the construction of moduli stacks, first introduced by Heuer, later refined by Yu who proved his version of moduli stacks are small v-stacks in the sense of \cite{S3}. We also construct a global period sheaf and write down explicit formulas for the functors.

\subsection{Acknowledgement}

This work is a research project of 2024 Algebra and Number Theory Summer School at Peking University, held by School of Mathematical Science, Peking University and Academy of Mathematics and System Science, Chinese Academy of Science. The authors would like to thank Yupeng Wang for introducing this problem and helpful discussions. Moreover, we would like to thank Yupeng Wang for carefully reading the preliminary draft of this paper. The authors would like to thank the organizers and institutes for providing a great collaborative environment. We would like to thank Jiahong Yu for explaining his paper \cite{Yu} and thank Yue Chen and Wenrong Zou for their interests in this project.

\subsection{Notations}

Throughout this paper, let $C$ be an algebraically closed perfectoid field containing $\mathbb Q_p$ with the ring of integers $\OO_C$. Let $\Perf_C$ denote the big v-site of perfectoid over $(C,\OO_C)$. Let $\A_{\inf}$ and $\B_\dR^+$ be its infinitesimal and de Rham period ring. Fix an embedding $p^{\mathbb Q}\subset C^{\times}$, which induces an embedding $\varpi^{\mathbb Q}\subset C^{\flat\times}$, where $\varpi = (p,p^{1/p},p^{1/p^2},\dots)\in C^{\flat}$. Fix a coherent system $\{\zeta_{p^n}\}_{n\geq 0}$ of primitive $p^n$-th roots of unity in $C$, and let $\epsilon:=(1,\zeta_p,\zeta_p^2,\dots)\in C^{\flat}$. Let $u = [\epsilon^{\frac{1}{p}}]-1\in\A_{\inf}$, and then the canonical surjection $\theta:\A_{\inf}\to \OO_C$ is principally generated by $\xi:=\frac{\phi(u)}{u}$. Let $t = \log([\epsilon])\in \B_\dR^+$ be the Fontaine's $p$-adic analogue of ``$2\pi i$''. For any (sheaf of) $\A_{\inf}$-module $M$ and any $n\in \mathbb Z$, denote by $M\{n\}$ its Breuil--Kisin--Fargues twist 
  \[M\{n\}:=M\otimes_{\A_{\inf}}\ker(\theta)^{\otimes n},\]
  which can be trivialized by the identification $M\{n\} = M\cdot\xi^n$.

Galois cohomology always means continuous Galois cohomology.

For a presheaf $F$ on some site denote $F^\#$ to be its sheafification.

For a $\mathbb Z$-filtered object $\Fil^\bullet F$ in some abelian category,  denote $F^{[a,b]}=\Fil^aF/\Fil^{b+1}F$ for any $-\infty \leq a\leq b \leq \infty$, where $\Fil^\infty F=0$ and $\Fil^{-\infty} F=F$.

An \'etale morphism between affinoid spaces is called standard \'etale if it is a composition of rational localizations and finite \'etale morphisms.

\section{Preliminaries}
We lay our foundations on \cite{Yu}. In this section we recall some definitions in \cite{Yu} and collect some basic properties.

\begin{defn}
By a smooth adic space $X$ over $B_\dR^+$, we mean 
a system of adic spaces $\bigl\{ X_\alpha \bigr\}_{\alpha \in \mathbb Z_{\geq 1}}$ such that for each $\alpha\geq 1$, the $X_\alpha$ is smooth over $\Spa(B_{\dR,\alpha}^+,B_{\dR,\alpha}^{++})$, and that for $\alpha'>\alpha$, it holds that
\[ X_{\alpha'} = X_\alpha \times_{B_{\dR,\alpha}^+} B_{\dR,\alpha'}^+. \]

We will always denote $X_1$ by $\overline X$. And if $X=\Spa\bigl(A,A^+\bigr)$ is also affinoid, then $A_\alpha=A/t^\alpha$.
\end{defn}

\begin{prop}\label{etsite}
Let $X$ be a smooth adic space over $B_\dR^+$, then for $\alpha < \alpha',$ the base change 
\[ X_{\alpha',?} \to X_{\alpha,?},\quad ?\in\{\et,\proet\} \]
are equivalences of sites.
\end{prop}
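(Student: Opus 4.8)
The plan is to recognise the transition map $X_{\alpha'}\to X_\alpha$ as the reduction along a nilpotent thickening and then to invoke the topological invariance of the small \'etale and pro-\'etale sites. Since $A_\alpha=A_{\alpha'}/t^\alpha A_{\alpha'}$ and $t^\alpha$ is nilpotent in $B^+_{\dR,\alpha'}$ (hence in $A_{\alpha'}$), the closed immersion $X_\alpha\hookrightarrow X_{\alpha'}$ is cut out by a nilpotent ideal sheaf, so it induces a homeomorphism on underlying topological spaces. Working \'etale-locally on $X$, I would first reduce to the case $X=\Spa(A,A^+)$ affinoid, where $A_{\alpha'}\twoheadrightarrow A_\alpha$ is a surjection with nilpotent kernel, and then show that the pullback functor $U\mapsto U\times_{X_\alpha}X_{\alpha'}$ is an equivalence of categories from \'etale (resp.\ pro-\'etale) spaces over $X_\alpha$ to those over $X_{\alpha'}$, carrying coverings to coverings.

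For the \'etale site, I would use that every \'etale map of affinoid adic spaces is, locally on the source, \emph{standard \'etale} --- a composite of rational localisations and finite \'etale maps (Huber; cf.\ the conventions recalled above and \cite{Yu}). Rational localisations depend only on the corresponding rational subset of the (unchanged) underlying space, and they lift along $A_{\alpha'}\twoheadrightarrow A_\alpha$ by lifting the defining functions, independently of the choice of lift; so they correspond bijectively on the two sides. Finite \'etale maps are insensitive to nilpotent thickenings: the assignment $B\mapsto B/t^\alpha B$ is an equivalence from finite \'etale $A_{\alpha'}$-algebras to finite \'etale $A_\alpha$-algebras, by the classical topological invariance of the finite \'etale site (equivalently, by vanishing of the obstruction and automorphism groups governing such deformations). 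Patching the two cases along the common underlying topological space gives essential surjectivity of the pullback functor; full faithfulness follows because a map between \'etale $X_{\alpha'}$-spaces is again \'etale, hence determined by and liftable from its pullback to $X_\alpha$. Since a family of \'etale maps is a covering exactly when it is jointly surjective on topological spaces, and those are unchanged, coverings correspond to coverings, so $X_{\alpha',\et}\to X_{\alpha,\et}$ is an equivalence of sites.

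For the pro-\'etale site I would transport this equivalence through the functorial construction of $X_{\proet}$ out of $X_\et$: its objects are pro-systems $\varprojlim_i U_i$ of \'etale maps that are pro-\'etale over $X$ in the sense of \cite{Yu}, its morphisms are the induced ones, and its coverings are detected, after an \'etale refinement, by surjectivity on underlying topological spaces. The levelwise equivalence of \'etale sites then identifies the categories of pro-\'etale objects over $X_\alpha$ and over $X_{\alpha'}$ compatibly with transition maps, and the topological criterion for coverings is preserved because nothing topological changes under the thickening; any distinguished class of affinoid perfectoid objects used in \cite{Yu} is itself defined through the \'etale site and the underlying space, hence transports as well. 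Therefore $X_{\alpha',\proet}\to X_{\alpha,\proet}$ is an equivalence of sites too.

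The step I expect to be the main obstacle is the unique lifting of \'etale morphisms along the nilpotent thickening $X_\alpha\hookrightarrow X_{\alpha'}$, i.e.\ essential surjectivity and full faithfulness of the pullback functor on \'etale sites. This is exactly where one must genuinely use the structure theory of \'etale morphisms of adic spaces (reduction to rational localisations and finite \'etale maps) together with the classical invariance of the finite \'etale site under nilpotent extensions; the non-Noetherian and strongly non-reduced nature of the base $B^+_{\dR,\alpha}$ makes it worth checking that these inputs apply in the generality of \cite{Yu}. Once this is secured, the assertions about morphisms, about coverings, and about the passage to the pro-\'etale site follow formally from the invariance of the underlying topological space.
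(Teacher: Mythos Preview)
Your proposal is correct and is essentially the same approach as the paper's: both rest on the fact that $X_\alpha\hookrightarrow X_{\alpha'}$ is a nilpotent closed immersion and hence the \'etale (and then pro-\'etale) site is topologically invariant. The paper's entire proof is the single phrase ``Clear by formal smoothness'' --- i.e.\ \'etale morphisms are formally \'etale and therefore lift uniquely along nilpotent thickenings --- whereas you unpack this by reducing to the standard-\'etale decomposition (rational localisations plus finite \'etale) and treating each piece by hand; your argument is simply a detailed justification of what the paper takes as known.
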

\begin{proof}
Clear by formal smoothness
\end{proof}

Denote by $X_?$ the sites
\[ X_? = \ilim X_{\alpha,?},\quad ?\in\{\et,\proet\} \]
and let $\nu: X_{1,v} \to X_\et$ be the natural projection.

\begin{defn}
Let $X$ be a smoothoid adic space over $\text{B}_\dR^+$. Define the categories
\begin{align*}
\ls(X)&=\{\text{$\mathbb B^+_\dR$-local systems on $X_{1,v}$}\},\\
\tmic(X)&=\{\text{$\OO_X$-bundles with integrable t-connection on $X_\et$}\}
\end{align*}
\end{defn}

\subsection{Moduli stacks}

Let $X$ be a smooth adic space over $\text{B}_\dR^+$. For any $S\in\Perf_C$, denote $X_S=X\times_{\text{B}^+_\dR}B^+_\dR(S)$.

In \cite{Yu}, Yu considered the following two functors from $\Perf_C$ to the category of groupoids:
\begin{align*}
\LS_X:&\ S\in \Perf_C\mapsto \ls(X_S),\\
\tMIC_X:&\ S\in \Perf_C\mapsto \tmic(X_S)
\end{align*}
For any morphism $f:S'\to S$ in $\Perf_C$, 
we have \begin{align*}
f^*_v\mathbb L=&f^{-1}_v\mathbb L\otimes_{f_v^{-1}\mathbb B_\dR^+}\mathbb B_\dR^+,\\
f^*_\et\mathcal E=&f^{-1}_\et\mathcal E\otimes_{f_\et^{-1}\mathcal O_{X_S}}\mathcal O_{X_{S'}}.
\end{align*}
Moreover, $(f^*_v,f_{*,v})$, $(f^*_\et,f_{*,\et})$ are adjoint functors between categories of sheaves of modules.

\begin{thm}\cite[Theorem 4.14]{Yu}
  The above $\LS_X$ and $\tMIC_X$ are small v-stacks in the sense of \cite{S3}.
\end{thm}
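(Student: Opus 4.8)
The plan is to prove that $\LS_X$ and $\tMIC_X$ are small v-stacks by verifying the two defining conditions separately: first, that each functor is a v-stack, i.e.\ satisfies v-descent as a functor valued in groupoids; and second, that the resulting stack is small, i.e.\ admits a surjection from a small v-sheaf (equivalently, is covered by a disjoint union of affinoid perfectoid spaces in a way controlled by a cardinality bound). I would treat $\tMIC_X$ first, since $\OO_X$-modules with $t$-connection are more hands-on, and then transport the argument to $\LS_X$, or run the two in parallel.

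For v-descent of $\tMIC_X$: given a v-cover $S' \to S$ in $\Perf_C$ with \v{C}ech nerve $S^{\bullet}$, I would show that the groupoid $\tmic(X_S)$ is the limit (equalizer in the $2$-categorical sense) of $\tmic(X_{S^{\bullet}})$. The key input is that $\OO_{X_S}$-bundles on $X_{S,\et}$ satisfy v-descent along $S' \to S$; this should follow from the fact that for affinoid $X_S = \Spa(A_S, A_S^+)$ the formation $S \mapsto \OO_{X_S}$ is a v-sheaf of rings — here one uses that $B_\dR^+(S)$ is built from $\A_{\inf}(S)$, which is a v-sheaf on $\Perf_C$ — together with descent for finite projective modules over a v-sheaf of rings, and the fact (Proposition~\ref{etsite}) that the \'etale site is insensitive to the infinitesimal thickening, so one can work $t$-adically and pass to the limit over $\alpha$. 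The integrable $t$-connection is an extra datum that is a section of a coherent sheaf satisfying a closed (cocycle) condition, so it descends once the underlying bundle does; compatibility of the $t$-connection with base change is exactly the formula $f^*_\et \mathcal E = f^{-1}_\et\mathcal E \otimes \OO_{X_{S'}}$ recorded in the excerpt. For $\LS_X$ one argues identically, replacing $X_\et$ by $X_{1,v}$ and $\OO_{X_S}$ by $\mathbb B_{\dR,X_S}^+$, and using that $\mathbb B_\dR^+$ is a v-sheaf on the relevant site together with descent of vector bundles over it.

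For smallness: one must exhibit, for each $S$, a presentation of the groupoid $\tmic(X_S)$ (resp.\ $\ls(X_S)$) controlled by a small amount of data, and then globalize. Locally on $X_\et$ one trivializes the bundle, so a rank-$r$ object is classified by the datum of a transition cocycle for a fixed finite \'etale-plus-rational-localization cover (using the notion of standard \'etale morphism from the Notations) together with a connection matrix; this realizes $\tMIC_X$ locally as a quotient stack $[\underline{M}/\underline{G}]$ where $\underline{M}$ parametrizes (cocycle, connection-matrix) pairs and $\underline{G}$ is the gauge group, both of which are representable by small v-sheaves because they are cut out by analytic conditions inside products of copies of the small v-sheaf $S \mapsto \OO_{X_S}(U)$ for affinoid opens $U$. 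One then checks that quotient stacks of small v-sheaves by small v-sheaf groups are small, and that the global stack is obtained by gluing these local quotient presentations along $X_\et$, which is quasi-compact quasi-separated, so finitely many charts suffice. The main obstacle I expect is precisely the bookkeeping in this last step: showing that the \emph{absolute} (over all of $\Perf_C$) moduli functor is small rather than just the fibers, which requires that the period rings $\OO_{X_S}$ and $\mathbb B^+_{\dR,X_S}$ vary in a small v-sheaf way in $S$ and that the trivializing covers can be chosen uniformly in $S$; here one leans heavily on Yu's analysis of adic spaces over $B_\dR^+$ and on the perfectoid-geometric fact that $S \mapsto B_\dR^+(S)$, $S \mapsto B_{\mathrm{dR},\alpha}^+(S)$ are small. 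Since this is quoted verbatim as \cite[Theorem 4.14]{Yu}, I would in the write-up simply cite it, but the sketch above is how I would reconstruct the proof.
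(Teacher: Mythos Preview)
The paper gives no proof of this statement at all: it is stated as a theorem with the citation \cite[Theorem 4.14]{Yu} and nothing further. Your last sentence already anticipates this --- in the write-up one simply cites Yu --- so there is nothing in the paper to compare your sketch against; your reconstruction is plausible as an outline of what Yu does, but it goes well beyond what the present paper commits to.
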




Similar to \cite{h1}, one can define Hitchin maps
\[H_X:\LS_X\to A_X,\]
\[H_X':\tMIC_X\to A_X\]
where $A_X$ is the Hitchin base of $X$, which is defined as the following $v$-sheaf on $\Perf_C$:
\[A_X: S\mapsto \bigoplus_{k\geq 0} H^0(\overline X_S, \Sym^k\Omega_{\overline X}(-1)).\]

More precisely, for any $\mathbb L \in \LS_X$, denote by  $\overline{\mathbb L}$ its reduction modulo $t$, which is an $\widehat{\mathcal O}_{\overline X}$-local system on $\overline X_{1,v}$. By \cite[Th. 4.2.1]{HX}, there exists a canonical Higgs field
\[\theta_{\overline{\mathbb L}}:\overline{\mathbb L}\to \overline{\mathbb L}\otimes_{\widehat{\mathcal O}_{\overline X}}\Omega^1_{\overline X}(-1)\]
satisfying some extra condition \cite[Th. 4.2.1 (a) and (b)]{HX}. Then we define $H_X(\mathbb{L})$ to be the coefficients of the characteristic polynomials of $\theta_{\overline{\mathbb L}}$. Similarly, for any $(D,\nabla_D)\in \tMIC_X$, denote by $(\overline D,\theta_{\overline D})$ its reduction modulo $t$, which is a Higgs field on $\overline X_{\et}$. Then we define $H_X'((D,\nabla))$ to be the coefficients of the characteristic polynomials of $\theta_{\overline D}$. A $\mathbb{B}_{\dR}^+$-local system $\mathbb{L}$ (resp. a integrable connection $(D,\nabla_D)$) is nilpotent if and only if $H_X(\mathbb{L}) = 0$ (resp. $H_X'((D,\nabla_D)) = 0$.)

Now we can also define stacks for our moduli problems.

\begin{cor}
The prestacks \begin{align*}
\LS^0_X:&\ S\mapsto \ls^0(\overline{X}_S),\\
\tMIC^0_X:&\ S\mapsto \tmic^0(X_S)
\end{align*}
where the superscript $0$ denote the subcategories of nilpotent objects, are small v-stacks.
\end{cor}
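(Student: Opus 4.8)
The strategy is to realize $\LS^0_X$ and $\tMIC^0_X$ as fiber products of small v-stacks --- the preimages of the zero section under the two Hitchin maps --- and then to invoke the stability of small v-stacks under fiber products. Recall from the discussion above that an object is nilpotent precisely when its Hitchin invariant vanishes, so $\ls^0(\overline X_S)\subseteq\ls(X_S)$ is the full subgroupoid on those $\mathbb L$ with $H_X(\mathbb L)=0$, and likewise $\tmic^0(X_S)\subseteq\tmic(X_S)$ is cut out by $H_X'=0$. The first point to establish is that $A_X$ is not merely a v-sheaf but a \emph{small} v-sheaf: for $S=\Spa(R,R^+)\in\Perf_C$, each summand $H^0(\overline X_S,\Sym^k\Omega_{\overline X}(-1))$ is obtained by base change along $S\to\Spa(C,\OO_C)$ from a fixed coherent sheaf on $\overline X$, so as a functor of $S$ it is a Banach--Colmez-type v-sheaf over $\Spa(C,\OO_C)$, and a countable direct sum of small v-sheaves is again small. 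In particular the zero section $0\colon\Spa(C,\OO_C)\to A_X$ is a morphism of small v-sheaves.

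Next I would check that $H_X\colon\LS_X\to A_X$ and $H_X'\colon\tMIC_X\to A_X$ are genuine morphisms of v-stacks, i.e.\ natural transformations of the underlying prestacks rather than mere objectwise assignments. For $H_X$ this reduces to the compatibility of the canonical Higgs field with pullback: for $f\colon S'\to S$ in $\Perf_C$ the construction of \cite[Th.~4.2.1]{HX} yields $\theta_{\overline{f^*_v\mathbb L}}=f^*_v\theta_{\overline{\mathbb L}}$, whence the coefficients of its characteristic polynomial --- and so $H_X(\mathbb L)$ --- pull back correctly; the analogous compatibility of $\theta_{\overline D}$ with $f^*_\et$ gives the naturality of $H_X'$. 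Together with the previous paragraph, we thereby obtain morphisms of small v-stacks landing in the small v-sheaf $A_X$.

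It then follows formally that there are canonical equivalences of prestacks
\[\LS^0_X\;\simeq\;\LS_X\times_{A_X,\,0}\Spa(C,\OO_C),\qquad \tMIC^0_X\;\simeq\;\tMIC_X\times_{A_X,\,0}\Spa(C,\OO_C),\]
since over each $S\in\Perf_C$ the right-hand fiber product is the full subgroupoid of $\LS_X(S)$ (resp.\ $\tMIC_X(S)$) on which the Hitchin invariant equals $0$, the v-sheaf $A_X$ being set-valued. As $\LS_X$ and $\tMIC_X$ are small v-stacks by \cite[Theorem~4.14]{Yu}, $A_X$ is a small v-sheaf, and the $2$-category of small v-stacks is closed under fiber products in the sense of \cite{S3}, we conclude that $\LS^0_X$ and $\tMIC^0_X$ are small v-stacks; in particular the prestacks in the statement satisfy v-descent.

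The main obstacle is concentrated in the first two paragraphs: one must verify that the Hitchin invariant is constructed functorially in the test perfectoid $S$ --- so that $H_X$ and $H_X'$ are morphisms of prestacks and not just objectwise assignments --- and that their common target $A_X$ is small rather than only a v-sheaf. Granted these two points, the conclusion is purely formal, via the fiber-product stability of small v-stacks.
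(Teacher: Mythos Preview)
Your proposal is correct and follows exactly the same route as the paper's one-line proof, which simply observes that $\LS^0_X$ and $\tMIC^0_X$ are the fibres of the Hitchin maps over $0\in A_X$. You have merely spelled out the details (smallness of $A_X$, naturality of $H_X$ and $H_X'$, stability of small v-stacks under fibre products) that the paper leaves implicit.
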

\begin{proof}
They are just the fibres at $0\in A_X$ of Hitchin maps.
\end{proof}

We can now state our main theorem.
\begin{thm}\label{thm:stacky RH}
There is an equivalence of stacks
\[RH^0:\LS^0_X\xrightarrow{\sim}\tMIC^0_X.\]

Namely, for any $S\in \Perf_C$, there is an equivalence of categories
\[RH^0_S:\ls^0(X_S)\xrightarrow{\sim}\tmic^0(X_S)\]
satisfying \[f^*_\et\circ RH^0_S\cong RH^0_{S'}\circ f^*_v\] 
for any map $f:S'\to S$ in $\Perf_C$.
\end{thm}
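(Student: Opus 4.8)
The plan is to construct the functor $RH^0_S$ locally, on a basis of the \'etale site consisting of (liftable) small affinoids equipped with a toric chart, glue the local constructions, and then check $v$-descent along $S$ separately. On each such affinoid $U = \Spa(A,A^+)$ over $B^+_\dR(S)$, the toric coordinates $T_1,\dots,T_d$ give a profinite Galois cover $U_\infty \to U$ with group $\Gamma \cong \Z_p(1)^d$, and a $\BB^+_\dR$-local system on $U_{1,v}$ becomes, after pulling back to $U_\infty$, a finite free $\BB^+_\dR$-module with a continuous semilinear $\Gamma$-action. Following the method of Liu--Zhu \cite{LZ} as adapted by Yu \cite{Yu}, I would build a global period sheaf $\obdr$ on $X_S$ together with the Sen-type operator, and define $RH^0_S(\mathbb L) = \bigl(\nu_*(\nu^*\mathbb L \otimes_{\mathbb B^+_\dR} \obdr), \id \otimes d\bigr)$ with the induced $t$-connection; nilpotence of the Sen operator is exactly what makes this pushforward an $\mathcal O_{X_S}$-bundle of the right rank (rather than merely a module), because a nilpotent operator has no ``extra'' invariants obstructing finite generation. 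The quasi-inverse sends $(D,\nabla_D)$ to the $v$-sheaf of flat sections of $D \otimes_{\mathcal O_{X_S}} \obdr$ in the appropriate filtered/twisted sense.

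Concretely, the key steps in order are: (1) recall from \cite{Yu} (and \cite{LZ}, \cite{HX}) the local structure — toric charts, the decompletion/Sen theory over $B^+_\dR(S)$, and the identification of nilpotent $\BB^+_\dR$-local systems on $U_\infty$ with nilpotent Sen modules; (2) on each chart, write down the functor and its quasi-inverse explicitly and verify they are mutually inverse equivalences, using that the relevant cohomology of $\obdr$-coefficients vanishes in positive degrees (a Poincar\'e-lemma / acyclicity computation on the toric chart, where nilpotence guarantees the needed degeneration); (3) check the local equivalences are independent of the chart and compatible with restriction along standard \'etale maps, hence glue by \'etale descent (Proposition \ref{etsite} ensures the \'etale site is insensitive to the level $\alpha$, so this is a statement over $X_\et$); (4) verify functoriality in $S$, i.e. that $f^*_\et \circ RH^0_S \cong RH^0_{S'} \circ f^*_v$ — this follows because the period sheaf $\obdr$ and the formation of $\nu_*$ are compatible with the base change $B^+_\dR(S') \to B^+_\dR(S)$, together with the adjunctions $(f^*_v, f_{*,v})$ and $(f^*_\et, f_{*,\et})$ recorded above; (5) conclude that $RH^0 := (RH^0_S)_{S}$ is a morphism of the $v$-stacks $\LS^0_X \to \tMIC^0_X$ which is an equivalence fiberwise, hence an equivalence of stacks.

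I expect the main obstacle to be step (2), and within it the interplay between nilpotence and the pushforward $\nu_*$: one must show that $\nu_*(\nu^*\mathbb L \otimes \obdr)$ is a \emph{vector bundle} of the expected rank with an \emph{integrable $t$-connection whose Higgs reduction is nilpotent}, and that no information is lost — equivalently, that the unit and counit of the adjunction between taking flat sections and tensoring with $\obdr$ are isomorphisms precisely on the nilpotent loci. This is where the restriction to the Hitchin fiber over $0$ is genuinely used: for general Sen operator the functor would only land in (and be detected by) a larger category, and the clean equivalence fails. The decompletion argument controlling the $\Gamma$-cohomology of $\obdr\{n\}$-coefficients on a toric chart — showing higher cohomology vanishes and $H^0$ is as small as possible — is the technical heart, and it is essentially the $B^+_\dR$-lifted, stacky refinement of the computations in \cite{LZ} and \cite{Yu}; the remaining gluing and base-change steps are then formal.
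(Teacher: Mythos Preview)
Your plan is essentially the paper's own proof: the functor is defined globally via the period sheaf as $RH^0_S(\mathbb L)=\nu_*(\mathbb L\otimes_{\mathbb B_\dR^+}\Fil^0\obdr)$ with inverse $(\mathcal E,\nabla)\mapsto(\nu^{-1}\mathcal E\otimes_{\nu^{-1}\mathcal O_X}\Fil^0\obdr)^{\nabla=0}$, the equivalence is checked on toric charts using decompletion/Sen theory (your step~(2), which is indeed the technical heart and is exactly where nilpotence is used to force the pushforward to be a vector bundle of the right rank), and base-change in $S$ (your step~(4)) is deduced from compatibility of $\obdr$ with $\mathbb B_\dR^+(S)\to\mathbb B_\dR^+(S')$ together with the two adjunction maps. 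The only correction needed is that you must tensor with $\Fil^0\obdr$, not the full $\obdr$: using the whole sheaf would produce an $\mathcal O_X[1/t]$-module with connection rather than an $\mathcal O_X$-bundle with $t$-connection, so the target category would be wrong; also note that the paper defines the functor \emph{globally} and only verifies the equivalence on charts, rather than gluing local definitions, which avoids the independence-of-chart check in your step~(3).
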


\begin{rmk}
    By smallness we may assume $S\in \Perf_C$ is topologically countably generated.
\end{rmk}

In the rest of this section, we refer to \cite[Section 2]{Yu} and remark that the setting in \emph{loc. cit.} fits over any perfectoid spaces over $C$.


  Now we focus on the construction for the desired equivalence of categories for $X_S$ as claimed in Theorem \ref{thm:stacky RH}. As we will always work with adic spaces over $\mathbb{B}_{\dR}^+(S)$, by abuse of notation, from now on, without other clarification, we always fix an $S\in \Perf_C$, put $B_{\dR}^+ = \mathbb{B}_{\dR}^+(S)$ and denote by $X$ a smooth adic space over $B_{\dR}^+ = \mathbb{B}_{\dR}^+(S)$ for short. In particular, the reduction $\overline X$ of $X$ modulo $t$ is a smooth adic space over $S$.

\subsection{Toric charts}\label{toric}
As for $\overline X_\et$, we have a base of $X_\et$ convenient for computations.

\begin{defn}
For $S\in \Perf_C$, denote the toric algebras over $B_\dR^+$ by
 \begin{align*}
  T^{d,?} = B_{\dR}^{+,?}\langle T_1^{\pm 1}, \ldots, T_d^{\pm 1}\rangle,
  T^{d,?} _m = B_{\dR}^{+,?}\langle T_1^{\pm 1/p^m}, \ldots, T_d^{\pm 1/p^m}\rangle, \text{ and }
  T^{d,?}_\infty =\dlim_m  T^{d,?} _m,   
 \end{align*} 
where $?\in\{\empty,+\}$ and for any $m\in\mathbb N\cup\{\infty\}$, put \begin{align*}
\mathbb T^d=\Spa \bigl(T^d,T^{d,+}\bigr) \text{ and }
\mathbb T^d_m=\Spa \bigl( T^d_m, T^{d,+}_m\bigr).
\end{align*}
For any smooth adic space $X$ over $B_\dR^+$, a toric chart of $X$ is a standard \'etale morphism $\psi: X\to \mathbb T^d$ of smooth adic spaces over $B^+_\dR$.
\end{defn}

\begin{prop}\label{etsite:more}
Let $X$ be an adic space over $B^+_{\dR}$. Then $X$ is smooth over $B^+_\dR$ if and only if there exists an open covering
\[ \{ U_i \subset X : i\in I \}\]
such that each $U_i$ admits a toric chart.
\end{prop}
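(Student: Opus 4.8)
The plan is to prove the two implications separately, the ``if'' direction being easy. Each torus $\mathbb T^d$ is smooth over $B^+_\dR$ --- at every level $\alpha$ the algebra $B^+_{\dR,\alpha}\langle T_1^{\pm1},\dots,T_d^{\pm1}\rangle$ is smooth over $B^+_{\dR,\alpha}$, compatibly with the defining base changes --- a standard \'etale morphism is in particular \'etale hence smooth, and smoothness is local on the source; so a space covered by opens admitting toric charts is smooth over $B^+_\dR$.

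For the converse I would argue as follows. Since the conclusion is local on $X$, reduce to $X=\Spa(A,A^+)$ affinoid; then $A$ is $t$-adically complete (it equals $\varprojlim_\alpha A/t^\alpha$), $\overline X=X_1$ is a smooth affinoid adic space over $S$, and after shrinking I may assume it has constant relative dimension $d$. First construct a toric chart downstairs: pick $f_1,\dots,f_d\in\OO(\overline X)$ whose differentials form an $\OO(\overline X)$-basis of $\Omega^1_{\overline X/S}$, add suitable scalars from $C^\times$ so that after restricting to a small enough member of an open cover they become units, and observe that the resulting morphism $\overline X\to\mathbb T^d_1$ (the mod-$t$ reduction of $\mathbb T^d$) is \'etale; by the standard analysis of \'etale maps from smooth affinoids (cf.\ \cite{Yu}) one further rational localization turns it into the composition of a rational localization with a finite \'etale morphism, i.e.\ a toric chart $\overline\psi\colon\overline X\to\mathbb T^d_1$.

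Next, lift $\overline\psi$ to a toric chart $\psi\colon X\to\mathbb T^d$ over $B^+_\dR$. Since $\mathbb T^d$ is formally smooth over $B^+_\dR$ there is such a lift with $\psi\bmod t=\overline\psi$ --- concretely, lift the units defining $\overline\psi$ to elements of $A$, which are automatically units because $A$ is $t$-adically complete. To check $\psi$ is standard \'etale, lift the rational open $V\subset\mathbb T^d_1$ appearing in the factorization of $\overline\psi$ to a rational open $V'\subset\mathbb T^d$ with $V'\bmod t=V$ (lift the functions cutting out $V$; by $t$-adic completeness of $\OO(\mathbb T^d)$ they still generate the unit ideal), and note $\psi$ factors through $V'$ since a nilpotent thickening is a homeomorphism on underlying spaces. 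The induced $X\to V'$ reduces mod $t$ to a finite \'etale morphism; properness and quasi-finiteness are insensitive to the thickenings $X_\alpha\hookrightarrow X_{\alpha+1}$, so $X\to V'$ is finite, and it is \'etale because $\psi^*\Omega^1_{\mathbb T^d/B^+_\dR}\to\Omega^1_{X/B^+_\dR}$ is a map of rank-$d$ locally free modules that is surjective modulo the nilpotent ideal $(t)$, hence surjective by Nakayama and hence an isomorphism. Thus $\psi$ is a toric chart, and such charts cover $X$.

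The step I expect to be the main obstacle is the construction of the toric chart for $\overline X$ over the perfectoid base $S$, and in particular the need to arrange the map to the torus to be \emph{standard} \'etale rather than merely \'etale --- the relative-over-$S$ analogue of the classical construction of toric charts on smooth rigid spaces. Granted that (it is available in \cite{Yu}), the lift from $\overline X$ to $X$ is a routine deformation argument driven by Proposition~\ref{etsite} and the $t$-adic completeness of $A$.
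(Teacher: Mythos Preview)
Your proposal is correct and follows the same strategy the paper indicates: construct a toric chart for $\overline X$ over the base and then lift it to $X$ using the equivalence of \'etale sites from Proposition~\ref{etsite} together with $t$-adic deformation. The paper's proof is the one-line reference ``See Proposition~\ref{etsite} and \cite[Lemma 2.5]{LZ}'', and your write-up is precisely the unpacking of that reference; the only cosmetic difference is that you appeal to \cite{Yu} rather than \cite{LZ} for the mod-$t$ existence of standard-\'etale toric charts, which is actually the more accurate citation in the relative-over-$S$ setting.
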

\begin{proof}
See Proposition \ref{etsite} and \cite[Lemma 2.5]{LZ}.
\end{proof}

Let $X=\Spa(A,A^+)$ be a smooth adic space over $B_{\dR}^+$. Assume $X=\Spa(A,A^+)$ admits a toric chart $X\to \mathbb T^d$. We set some notations for local considerations.

Denote $X_n = X \times_{\mathbb T^d} \mathbb T^d_n$,
and $X_\infty = \ilim_n X_n$. Then $\overline X_{\infty}$ is the perfectoid space associated to $\varprojlim_m \overline X_m$ and $\overline X_{\infty}\to \overline X$ is a pro-\'etale Galois covering. Denote by $\Gamma$ its Galois group, then we have an isomorphism
 \[\Gamma\cong \mathbb Z_p(1)^d = \bigl\langle \gamma_1,\ldots,\gamma_d \bigr\rangle,\]
acting on $\overline X_{\infty}$ by $\gamma_i T_j^{1/p^n} = \zeta_{p^n}^{\delta_{ij}}T_j^{1/p^n},\quad 1\leq i,j\leq d, n\geq 0$. 

Write $X_n = \Spa \bigl( A_n, A_n^+ \bigr)$ and $A_\infty=\dlim_n A_n$.
As there is no canonical completion of $T^d_\infty$(see the discussion after Definition 2.12 in \cite{Yu}), we have to introduce the complete toric tower as in \cite{Yu}.

Let $\hat{\bar{\mathbb{T}}}_{\infty}^d$ be the perfectoid space associated to $\varprojlim \overline{\mathbb{T}}^d_m$. Then we have
  \[\hat{\bar{\mathbb{T}}}_{\infty}^d = \Spa(\hat{\bar T}_{\infty}^d,\hat{\bar T}_{\infty}^{d,+}) = \Spa(S\langle T_1^{\pm 1/p^\infty}, \ldots, T_d^{\pm 1/p^\infty}\rangle,S^+\langle T_1^{\pm 1/p^\infty}, \ldots, T_d^{\pm 1/p^\infty}\rangle).\]
Let $\widehat T_\infty^{d,?}=\mathbb B_\dR^{+,?}\bigl(\hat{\bar{\mathbb{T}}}_{\infty}^d\bigr)$. 
There is a map of $B_{\dR}^+$-algebras
\[T^d_\infty\to \widehat T_\infty^d\]
sending each $T_i^{1/p^n}$ to $[(T_i^{\flat})^{1/p^n}]$ for any $1\leq i\leq d$ and $n\geq 0$, which is an injection and identifies the former as a dense subalgebra of the latter.
Then $T^d_\infty\subset \widehat T_\infty^d$ is a dense subalgebra by map $T_i\mapsto \bigl[T_i^\flat \bigr]$, and 
\[\mathbb T_\infty^d=\ilim_n\mathbb T_n^d\sim\Spa \bigl(\widehat T_\infty^d,\widehat T_\infty^{d,+}\bigr)\]
in the sense of \cite[Definiton 2.4.1]{scholze2013modulipdivisiblegroups}

Put $\hat{\bar A}_\infty = \widehat{\mathcal O}_{\overline X}(\overline X_{\infty})$ and for any $?\in\{\emptyset,+\}$, put 
\[\widehat A^{?}_\infty=\mathbb B_\dR^{+,?}(\overline X_{\infty}).\]
By the \'etaleness of $X_n$ over $\overline{\mathbb{T}}^d_n$, there exists a morphism of $B_{\dR}^+$-algebras
\[A_{\infty}\to \widehat A_{\infty}\]
extending the map $T^d_\infty\to \widehat T_\infty^d$ above which is an injection and identifies the former as a dense sub-algebra of the latter. Moreover, by \cite[Proposition 2.9]{Yu}, we have an isomorphism
\[\widehat A_\infty\cong A\otimes_{T^d}\widehat T_\infty^d.\]
As $\overline X_{\infty}\to \overline X$ is a $\Gamma$-torsor, the $\widehat A_\infty$ carries a natural $\Gamma$-action, determined by that for any $1\leq i,j\leq d$ and any $n\geq 0$,
\[ \gamma_i \bigl[ \bigl( T_j^\flat \bigr)^{1/p^n} \bigr] = \bigl[ \epsilon^{\delta_{ij}/p^n} \bigr] \cdot \bigl[ \bigl( T_j^\flat \bigr)^{1/p^n} \bigr]. \]
Denote by $\Gamma^{p^n}$ the subgroup of $\Gamma$ corresponding to the Galois covering $\overline X_{\infty}\to \overline X_n$. By \cite[Proposition 2.25]{Yu}, we have 
\[A_n = (\widehat A_{\infty})^{\Gamma^{p^n}\text{-an}}\]
is the $B_{\dR}^+$-subalgebra consisting of elements on which $\Gamma^{p^n}$ acts analytically.
The $\mathbb B_\dR^+$-structure induces a natural $\Gamma$-action on $\widehat A_\infty$.
Under this action, $A_n=\widehat A_\infty^{p^n\Gamma-an}$. See \cite[Proposition 2.25]{Yu}.

\begin{prop}[\emph{\cite[Proposition 3.19]{Yu}}]\label{kl}
For $X$ above there is a canonical equivalence from the category of $\mathbb B_{\dR}^+$-local systems on $X_{1,v}$ of rank $r$ to the category of following data:
\[(M,\rho:\Gamma\times M\to M)\]
where $M$ is a projective module over $\widehat A_\infty$ of rank $r$ and $\rho$ is a continuous semilinear $\Gamma$-action.
\end{prop}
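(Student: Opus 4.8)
The plan is to prove the equivalence by faithfully flat descent along the pro-\'etale Galois covering $\pi\colon\overline X_\infty\to\overline X$ with group $\Gamma\cong\mathbb Z_p(1)^d$. Since $\pi$ is in particular a v-covering and the fibre product of the $\Gamma$-torsor $\pi$ with itself is $\Gamma\times\overline X_\infty$ (with $\Gamma$ viewed as a profinite set), v-descent for sheaves of $\mathbb B_\dR^+$-modules identifies the category of rank-$r$ $\mathbb B_\dR^+$-local systems on $X_{1,v}=\overline X_v$ with the category of pairs consisting of a v-locally free $\mathbb B_\dR^+|_{\overline X_{\infty,v}}$-module $\mathcal M$ of rank $r$ together with a descent datum; a descent datum for this covering is precisely a continuous semilinear $\Gamma$-action. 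Combined with the assertion that over the affinoid perfectoid $\overline X_\infty$ such an $\mathcal M$ is the same thing as a finite projective module over $\widehat A_\infty=\mathbb B_\dR^+(\overline X_\infty)$, this yields the claim. Concretely, the functor one way is $\mathbb L\mapsto(M,\rho)$ with $M:=\mathbb L(\overline X_\infty)$ and $\rho$ the induced action, semilinear over the $\Gamma$-action on $\widehat A_\infty$ recorded above; the candidate quasi-inverse sends $(M,\rho)$ to the v-sheaf $\widetilde M$ on $\overline X_v$ given on an affinoid perfectoid $U\to\overline X$ by $\widetilde M(U)=\bigl(M\otimes_{\widehat A_\infty}\mathbb B_\dR^+(U\times_{\overline X}\overline X_\infty)\bigr)^{\Gamma}$.

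First I would reduce everything to the pro-\'etale site: $\pi$ is already pro-\'etale, and by \cite{Yu} the notion of $\mathbb B_\dR^+$-local system on $X_{1,v}$ agrees via $\nu$ with the one on $X_{1,\proet}$, so all cohomology below is that of affinoid perfectoids lying over $\overline X_\infty$. The crux is the acyclicity statement: for such a $U$ one has $H^i(U,\mathbb B_\dR^+)=0$ for $i>0$ and $H^0(U,\mathbb B_\dR^+)=\mathbb B_\dR^+(U)$. I would obtain this by d\'evissage along the filtration $\Fil^n\mathbb B_\dR^+=t^n\mathbb B_\dR^+$, whose graded pieces are line-bundle twists of $\widehat{\mathcal O}$ and hence acyclic on affinoid perfectoids by \cite{S2}, together with passage to the limit over $n$ (with no $\varprojlim^1$-obstruction, using that the relevant rings are $t$-torsion-free and $t$-adically complete). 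Granting this: full faithfulness follows from the sheaf axiom for $\pi$ applied to the $\mathbb B_\dR^+$-local system $\underline\Hom(\mathbb L_1,\mathbb L_2)$, which gives $\Hom(\mathbb L_1,\mathbb L_2)=\bigl(\underline\Hom(\mathbb L_1,\mathbb L_2)(\overline X_\infty)\bigr)^{\Gamma}$ once one identifies the value over the double fibre product with continuous maps $\Gamma\to(-)$; and for essential surjectivity, $\widetilde M$ is a v-sheaf because each $\mathbb B_\dR^+(U\times_{\overline X}\overline X_\infty)$ is, and its restriction to $\overline X_\infty$ is the $\mathbb B_\dR^+$-module $U\mapsto M\otimes_{\widehat A_\infty}\mathbb B_\dR^+(U)$ attached to the finite projective $\widehat A_\infty$-module $M$, hence v-locally free of rank $r$, so that $\widetilde M$ is a local system with $(\widetilde M(\overline X_\infty),\rho)\cong(M,\rho)$.

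It remains to check that $M=\mathbb L(\overline X_\infty)$ is genuinely finite projective of rank $r$ over $\widehat A_\infty$. Reducing modulo $t$, $\overline{\mathbb L}=\mathbb L/t$ is a $\widehat{\mathcal O}_{\overline X}$-local system on $\overline X_{\infty,v}$, which over the affinoid perfectoid $\overline X_\infty$ corresponds to a finite projective $\hat{\bar A}_\infty$-module of rank $r$ by \cite{S2} (equivalently \cite{h}). One then lifts across $\widehat A_\infty\twoheadrightarrow\widehat A_\infty/t=\hat{\bar A}_\infty$: since $\widehat A_\infty$ is $t$-adically complete and $t$-torsion-free, idempotents and finite projective modules lift, and comparison with the $t$-adically complete $\mathbb B_\dR^+$-module $M$ (using the acyclicity to control each $M/t^n$) identifies the lift with $M$. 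Continuity of $\rho$ is automatic from continuity of the $\Gamma$-action on $\widehat A_\infty$ together with finiteness of $M$, and semilinearity is inherited from the formula $\gamma_i\bigl[(T_j^\flat)^{1/p^n}\bigr]=\bigl[\epsilon^{\delta_{ij}/p^n}\bigr]\bigl[(T_j^\flat)^{1/p^n}\bigr]$.

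\textbf{Main obstacle.} The only non-formal ingredient is the acyclicity/projectivity package for $\mathbb B_\dR^+$ on affinoid perfectoids: one needs the $t$-adic deformation of Scholze's almost acyclicity of $\widehat{\mathcal O}$, the vanishing of the relevant $\varprojlim^1$ terms, and the passage from an abstract v-locally free $\mathbb B_\dR^+$-module to an honest finite projective $\widehat A_\infty$-module. Once this is available the descent argument is purely formal; this is the content established in \cite[Proposition 3.19]{Yu}.
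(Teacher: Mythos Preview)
Your proposal is correct and essentially unpacks the argument that the paper defers entirely to a citation: the paper's proof consists of the single line ``See \cite[Theorem 3.5.8]{kl2}.'' Your descent-along-$\pi$ strategy, with the acyclicity of $\mathbb B_\dR^+$ on affinoid perfectoids obtained by $t$-adic d\'evissage from Scholze's result for $\widehat{\mathcal O}$, and the identification of v-locally free $\mathbb B_\dR^+$-modules over an affinoid perfectoid with finite projective $\widehat A_\infty$-modules, is exactly the content of the Kedlaya--Liu theorem being cited, so there is no genuine difference in approach to compare. One small remark: in your final paragraph you write that the acyclicity/projectivity package ``is the content established in \cite[Proposition 3.19]{Yu}'', which is the very proposition under discussion; it would be cleaner to point directly to \cite{kl2} (or to \cite[Proposition 8.8]{S3}) for that package, as the paper itself does.
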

\begin{proof}
    See \cite[Theorem 3.5.8]{kl2}.
\end{proof}

With these constructions Yu develop the decompletion theory on toric charts.
\begin{prop}[\emph{\cite[Theorem 3.22]{Yu}}]\label{de}
The triple $\bigl(\dlim A_{n,\alpha} , \widehat A_{\infty,\alpha}, \Gamma\bigr)$ forms a strong decompletion system for any $\alpha\in\mathbb N\cup\{\infty\}$ in the sense of \cite[Appendix A]{diao}.
\end{prop}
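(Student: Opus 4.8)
The plan is to check the axioms of \cite[Appendix A]{diao} for the triple by reducing, first, to the toric base $\mathbb{T}^d$ via the \'etale chart $X\to\mathbb{T}^d$, and, second, to the classical perfectoid case $\alpha=1$ by dévissage along the $t$-adic filtration. Recall that a strong decompletion system requires: (i) a compatible family of $\Gamma$-equivariant, uniformly bounded ``normalized trace'' projections $R_n\colon\widehat A_{\infty,\alpha}\to A_{n,\alpha}$ whose images are the pro-analytic pieces $A_{n,\alpha}=\widehat A_{\infty,\alpha}^{\,\Gamma^{p^n}\text{-an}}$ and exhaust a dense subring, together with (ii) a Tate--Sen type estimate: for $n$ large and $\gamma$ a topological generator of $\Gamma^{p^n}$, the operator $\gamma-1$ is invertible on $\ker R_n$ with the norm of the inverse bounded by a constant independent of $n$. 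Throughout I use that $\Gamma$ acts $\bdr(S)$-linearly and in particular fixes $t=\log[\epsilon]\in\bdr(S)$, since the covering $\overline X_\infty\to\overline X$ does not move the base $S$.

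On the toric tower I would construct the $R_n$ explicitly. The ring $\widehat T_\infty^{d}$ has a monomial ``orthonormal'' description with topological basis $\{T^{\mathbf a}:\mathbf a\in\Z[1/p]^d\}$ over $\bdr(S)$ (completed $t$-adically and for the perfectoid norm of $S$), on which $\gamma_i$ acts by the scalar $[\epsilon^{a_i}]=\exp(a_i t)$; define $R_n$ to be the projection onto the closed span of those $\mathbf a\in p^{-n}\Z^d$. This is manifestly $\Gamma$-equivariant and of operator norm $\leq 1$, its image is $T_n^d$, and reduction modulo $t^\alpha$ produces the required $R_n$ on $\widehat T^d_{\infty,\alpha}$; density of $\bigcup_n T^d_{n,\alpha}$ in $\widehat T^d_{\infty,\alpha}$ is the statement already recorded in Section~\ref{toric}. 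Using the isomorphism $\widehat A_\infty\cong A\otimes_{T^d}\widehat T_\infty^d$ of \cite[Proposition 2.9]{Yu} together with $A_n=A\otimes_{T^d}T_n^d$, I would transport these to $R_n^A:=\id_A\otimes R_n$ on $\widehat A_{\infty,\alpha}$ and use standard étaleness arguments to propagate boundedness, continuity and density from the toric tower to $X$ (this is exactly the base-change stability of decompletion data along standard \'etale maps in the framework of \cite{diao}).

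The estimate (ii) is where the real work lies and is the step I expect to be the main obstacle. For $\alpha=1$ we are looking at $\hat{\bar A}_\infty=\widehat{\mathcal O}(\overline X_\infty)$ with $\gamma_i$ acting on $T^{\mathbf a}$ by a root of unity $\zeta_{p^v}^{b}$ whose denominator valuation $v$ exceeds $n$ on $\ker R_n$; then $\gamma_i-1$ is multiplication by $\zeta_{p^v}^{b}-1$, invertible in the Banach $C$-algebra $\hat{\bar A}_\infty$ with inverse of norm $\leq p^{1/(p-1)}$, so this is precisely the classical Sen/decompletion estimate and can be quoted from \cite{diao} or \cite{LZ}. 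For general $\alpha$ I would induct on $\alpha$ via the $\Gamma$-equivariant exact sequences
\[0\to t^{\alpha}\widehat A_{\infty,\alpha+1}\to\widehat A_{\infty,\alpha+1}\to\widehat A_{\infty,\alpha}\to 0,\]
whose kernel, because $\Gamma$ fixes $t$, is isomorphic to $\widehat A_{\infty,1}$ as a $\Gamma$-module (up to shifting the filtration degree); invertibility of $\gamma_i-1$ on the $\ker R_n$-parts of the sub and the quotient, combined with a successive-approximation argument, yields invertibility on the middle term with an explicitly controlled bound. The delicacy is that $t$ is topologically nilpotent modulo $t^\alpha$, so on the \emph{full} module $\gamma_i-1$ is not invertible on the naive monomial complement; one must instead work on the graded pieces $\gr^j\widehat A_\infty\cong\widehat A_{\infty,1}$, where $\gamma_i-1$ restricts to the $\alpha=1$ operator, and then check that reassembling through the filtration preserves the precise quantitative form demanded by the axioms of \cite[Appendix A]{diao}. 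Once this is in place, all axioms hold and the triple $(\dlim A_{n,\alpha},\widehat A_{\infty,\alpha},\Gamma)$ is a strong decompletion system.
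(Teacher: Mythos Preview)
The paper does not give its own proof of this proposition: it is stated with a citation to \cite[Theorem 3.22]{Yu} and followed only by the remark ``For details, please consult \cite{Yu}.'' So there is no argument in the paper to compare your proposal against.

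That said, your outline is the expected one and matches the strategy carried out in \cite{Yu}: construct the normalized traces explicitly on the toric tower by monomial projection, transport them to $\widehat A_{\infty,\alpha}$ via $\widehat A_\infty\cong A\otimes_{T^d}\widehat T_\infty^d$, use the classical Tate--Sen estimate for $\alpha=1$, and then run a $t$-adic d\'evissage (using that $\Gamma$ fixes $t$) to reach general $\alpha$. The one place where your sketch is a little loose is the passage from the graded pieces back to the filtered object in the quantitative estimate for $(\gamma-1)^{-1}$: you correctly flag this as the delicate step, and indeed making the bound uniform in $\alpha$ (and handling $\alpha=\infty$) requires some care with the topology on $\widehat A_{\infty,\alpha}$, but this is exactly what \cite[\S3]{Yu} does, so citing it is appropriate here.
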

For details, please consult \cite{Yu}.

\subsection{Period sheaves}

Here, we introduce the relative version of period sheaves with connections $(\mathcal O\mathbb{B}_{\dR}^+,d)$ and $(\mathcal O\mathbb{B}_{\dR},d)$ as in \cite{S2}.

Let $X$ be a smooth adic space $X$ over $B_\dR^+$. For any affinoid perfectoid $U=\Spa(R,R^+)\in X_{1,v}$, let $\Sigma_U$ be the set of all \'etale maps over $S$ \[i_{Y_1}: Y_1 = \Spa(A_1,A_1^+)\to X_1\] such that the natural map $U\to X$ factors through $i_{Y_1}$. Such $i_{Y_1}$ admits a unique lifting \[i_{Y}: Y = \Spa(A,A^+)\to X\] which is an \'etale map over $B_\dR^+$. There is a natural morphism
\[\theta_Y:A\widehat\otimes_{B_\dR^+}B_\dR^+(U)\to R\]
defined by $A\to A_1\to R$ and $\theta:B_\dR^+(U)\to R$.

\begin{defn}
Define the sheaf on $X_{1,v}$ \[ \OO\BB_{\dR(,X/S)}^+ =\left( U\mapsto\dlim_{\Sigma_U} \ilim_{j,\alpha} \left( \bigl( A_\alpha\widehat\otimes_{B_{\dR,\alpha}^+}B_{\dR,\alpha}^+(U)\bigr) /(\ker \theta_Y)^j\right)\right)^\# \]
with a map $\theta: \mathcal O \mathbb B_\dR^+ \to \widehat{\mathcal O}_{\overline X}$ induced by $\theta_Y$'s.

It admits the filtration $\Fil^j \obdr^+=(\ker \theta)^j$.

It admits the connection
\[d: \mathcal O \mathbb B_\dR ^+ \to \mathcal O \mathbb B_\dR^+ \otimes_{\nu^{-1} \mathcal O_X}\nu^{-1} \Omega_X\] induced by the universal one
$A\to A\otimes \Omega_X$, where we write $\Omega^1_X=\Omega^1_{X/\mathbb B_\dR^+(S)}$ the sheaf of differentials in the sense of \cite{hub}.

Moreover, let
$\mathcal O \mathbb B'_\dR = \mathcal O \mathbb B_\dR^+[t^{-1}]$
which is equipped with a filtration \[\Fil^j \mathcal O \mathbb B'_\dR = \sum_{i\in \mathbb Z} t^i \Fil^{j-i} \mathcal O \mathbb B_\dR^+.\]

Define $\mathcal O \mathbb B_\dR$ to be the completion of $\mathcal O \mathbb B'_\dR$ along this filtration. It admits the connection 
\[d: \mathcal O \mathbb B_\dR \to \mathcal O \mathbb B_\dR\otimes_{\nu^{-1} \mathcal O_X}\nu^{-1} \Omega_X\]
satisfying Griffiths transversality, which is a $t$-connection on $\Fil^0 \mathcal O \mathbb B_\dR$
\[d: \Fil^0 \mathcal O \mathbb B_\dR \to \Fil^{-1} \mathcal O\mathbb B_\dR \otimes_{\nu^{-1}\mathcal O_X}\nu^{-1} \Omega_X. \]

\end{defn}

\begin{prop}[\emph{\cite[Proposition 2.34]{Yu}}]\label{obdr+}
Assume $X$ admits a toric chart. Then the natural map
\[\iota:\mathbb B^+_{\dR|X_\infty} [[U_1,\ldots,U_d]] \to
\mathcal O \mathbb B^+_{\dR|X_\infty}\]
sending $U_i$ to $T_i - \bigl[ T_i^\flat \bigr]$ is an isomorphism of sheaves of $\mathbb B_\dR^+$-algebra compatible with filtration, where the left side is equipped with the $(t,U_1,\dots,U_d)$-adic filtration.
Via this isomorphism,\[
\iota^{-1}d = \sum_{i=1}^d\frac{\partial}{\partial U_i}\otimes d T_i.
\]
\end{prop}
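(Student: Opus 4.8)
The plan is to verify that the map $\iota$ is well-defined, that it is an isomorphism, and that it intertwines the connection with $\sum_i \partial/\partial U_i\otimes dT_i$, working on the toric chart where everything can be computed explicitly. First I would check that $\iota$ is well-defined: since $T_i - [T_i^\flat]$ maps to $0$ under $\theta_Y$ (both $T_i$ and $[T_i^\flat]$ have the same image in $\widehat{\mathcal O}_{\overline X}$), each $U_i$ lands in $\Fil^1\mathcal O\mathbb B_{\dR}^+$, so the substitution $U_i\mapsto T_i-[T_i^\flat]$ extends from $\mathbb B_{\dR|X_\infty}^+$ to a continuous $\mathbb B_{\dR}^+$-algebra map out of the power series ring, compatibly with the $(t,U_1,\dots,U_d)$-adic filtration on the source and the $\Fil^\bullet=(\ker\theta)^\bullet$ filtration on the target (here one uses that $\ker\theta$ on $X_\infty$ is generated by $t$ together with the $U_i$, which is exactly the content one extracts from the structure of $\mathcal O\mathbb B_{\dR}^+$ over a perfectoid).

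Next I would prove $\iota$ is an isomorphism. The cleanest route is to pass to the explicit local description of $\mathcal O\mathbb B_{\dR}^+$ from the definition: over a toric chart, $\mathcal O\mathbb B_{\dR}^+$ is built as a completion of $A\widehat\otimes_{B_{\dR}^+}B_{\dR}^+(U)$ along $\ker\theta_Y$, and since $A = A$ is étale over $T^d$ and $\overline X_\infty$ is the relevant perfectoid cover, one has $\widehat A_\infty \cong A\otimes_{T^d}\widehat T_\infty^d$ by \cite[Proposition 2.9]{Yu}. Thus it suffices to treat the toric case $X = \mathbb T^d$, where $A\widehat\otimes_{B_{\dR}^+}B_{\dR}^+(U)$ is a completed tensor product of $B_{\dR}^+(U)\langle T_i^{\pm}\rangle$ with itself along the two copies of the $T_i$'s, and the elements $U_i = T_i - [T_i^\flat]$ are a regular sequence whose completion gives precisely $\mathbb B_{\dR|X_\infty}^+[[U_1,\dots,U_d]]$; the general étale $X$ follows by base change along $T^d\to A$ since forming $\Omega^1$ and the period sheaf both commute with étale localization. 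I would then read off surjectivity and injectivity from this presentation, checking filtration-compatibility degree by degree (on graded pieces $\iota$ becomes an obvious identification of polynomial algebras).

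Finally, for the formula $\iota^{-1}d = \sum_i \partial/\partial U_i\otimes dT_i$, I would note that $d$ on $\mathcal O\mathbb B_{\dR}^+$ is the unique continuous $\mathbb B_{\dR}^+$-linear connection extending the universal derivation $A\to A\otimes\Omega_X^1$, and $\mathbb B_{\dR|X_\infty}^+$ lies in its kernel (being pulled back from the base), so $d$ is determined by its values on the $U_i$. Since $dU_i = d(T_i - [T_i^\flat]) = dT_i$ (as $[T_i^\flat]$ is a horizontal section, coming from $B_{\dR}^+(U)$), the Leibniz rule forces $d = \sum_i \partial/\partial U_i\otimes dT_i$ on the power series ring; the only point requiring care is continuity of $d$ for the filtration topology, which follows since $d$ shifts the $\ker\theta$-filtration by at most one step (Griffiths transversality in the $\mathcal O\mathbb B_{\dR}$ version, and strictly so here in the $+$ version up to the $t$-twist).

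The main obstacle I expect is the isomorphism statement — specifically, pinning down that $\ker\theta_Y$ on the toric cover is generated precisely by $(t, U_1,\dots,U_d)$ as a regular sequence and that completing along it reproduces the power series ring rather than something larger; this is where one must be careful about which completion is taken in the definition of $\mathcal O\mathbb B_{\dR}^+$ and invoke the perfectoidness of $\overline X_\infty$ (so that $B_{\dR}^+(U)$ is well-behaved and $\theta$ there is generated by $\xi$). Once the toric case is nailed down, the étale descent to general $X$ and the connection formula are routine.
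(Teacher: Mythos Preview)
Your proposal is correct and follows the standard argument (originating with Scholze and reproduced in \cite{Yu}): verify $T_i-[T_i^\flat]\in\ker\theta$, reduce via $\widehat A_\infty\cong A\otimes_{T^d}\widehat T_\infty^d$ to the toric case, identify $\ker\theta_Y$ with the ideal $(t,U_1,\dots,U_d)$ as a regular sequence, and read off the connection from $dU_i=dT_i$. Note, however, that the present paper does not give its own proof of this proposition at all---it is quoted verbatim from \cite[Proposition 2.34]{Yu} and used as a black box---so there is nothing to compare your argument against beyond the citation; your sketch is essentially what one finds in \cite{Yu} (and before that in \cite[Proposition 6.10]{S2}).
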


\begin{cor}\label{obdr}
The natural map
\[\iota:\mathbb B^+_{\dR|X_\infty}\langle V_1,\ldots,V_d\rangle \to
\mathcal O \mathbb B^{[0,+\infty]}_{\dR|X_\infty}\]
sending $V_i$ to $\frac 1 t \log \frac{[ T_i^\flat]}{T_i}$ is an isomorphism compatible with filtration. Moreover,\[
\iota^{-1}d = -\sum_{i=1}^d\frac{\partial}{\partial V_i}\otimes \frac{d \log T_i}{t}.
\]
\end{cor}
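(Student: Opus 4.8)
The plan is to deduce this from Proposition \ref{obdr+} by an explicit change of variables. Recall from \emph{loc. cit.} that on $X_\infty$ we have $\mathcal O\mathbb B^+_{\dR} \cong \mathbb B^+_{\dR}[[U_1,\dots,U_d]]$ with $U_i = T_i - [T_i^\flat]$, and the $(t,U_1,\dots,U_d)$-adic filtration. After inverting $t$ and completing, one sees that $\mathcal O\mathbb B^{[0,+\infty]}_{\dR|X_\infty} = \Fil^0\mathcal O\mathbb B_{\dR|X_\infty}$ is the $t$-adic completion of the subring of $\mathbb B_{\dR}[[U_\bullet]]$ generated by the elements of filtration degree $\geq 0$; since $U_i$ has filtration degree $1$ but $U_i/t$ has degree $0$, the natural candidate generators are $W_i := U_i/t = (T_i - [T_i^\flat])/t$. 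First I would verify that $\Fil^0\mathcal O\mathbb B_{\dR|X_\infty} \cong \mathbb B^+_{\dR|X_\infty}\langle W_1,\dots,W_d\rangle$ (power series convergent $t$-adically), compatibly with filtrations, where the right side gets the $t$-adic filtration; this is a formal manipulation with the definition of $\Fil^\bullet\mathcal O\mathbb B_{\dR}$ and the description of $\Fil^\bullet\mathcal O\mathbb B^+_{\dR}$.

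Next I would relate $W_i$ to the variable $V_i := \frac1t\log\frac{[T_i^\flat]}{T_i}$ claimed in the statement. Writing $\frac{[T_i^\flat]}{T_i} = \frac{1}{1 + U_i/[T_i^\flat]}$, we get $V_i = -\frac1t\log(1 + U_i/[T_i^\flat]) = -\frac1t\sum_{k\geq 1}\frac{(-1)^{k-1}}{k}(U_i/[T_i^\flat])^k$. The leading term is $-U_i/(t[T_i^\flat]) = -W_i/[T_i^\flat]$, and since $[T_i^\flat]$ is a unit in $\mathbb B^+_{\dR|X_\infty}$ and the higher terms lie in $t\cdot(\text{stuff})$ — indeed $(U_i/[T_i^\flat])^k = t^{k}(W_i/[T_i^\flat])^k$ contributes $t^{k-1}$ times a bounded quantity — the map $W_i \mapsto -[T_i^\flat]V_i$ (equivalently $V_i \mapsto -W_i/[T_i^\flat] + O(t)$) defines an automorphism of the ring $\mathbb B^+_{\dR|X_\infty}\langle W_1,\dots,W_d\rangle$ preserving the $t$-adic filtration, because it is the identity modulo $t$ up to the unit $[T_i^\flat]$ and is triangular in the $t$-adic sense. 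Hence $\mathbb B^+_{\dR|X_\infty}\langle V_1,\dots,V_d\rangle \xrightarrow{\sim} \Fil^0\mathcal O\mathbb B_{\dR|X_\infty}$, giving the asserted isomorphism.

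Finally I would compute the connection. From Proposition \ref{obdr+}, $d = \sum_i \frac{\partial}{\partial U_i}\otimes dT_i$, and $d$ is $\mathbb B^+_{\dR}$-linear (so $dt = 0$, $d[T_i^\flat] = 0$). Then $dV_i = d\!\left(-\tfrac1t\log\tfrac{[T_i^\flat]}{T_i}\right) = -\tfrac1t\cdot\tfrac{T_i}{[T_i^\flat]}\cdot\left(-\tfrac{[T_i^\flat]}{T_i^2}\right)dT_i = \tfrac1{t T_i}dT_i = \tfrac{d\log T_i}{t}$. A cleaner route: change coordinates in the formula $d = \sum_i \frac{\partial}{\partial U_i}\otimes dT_i$ using $U_i = t[T_i^\flat]V_i$ doesn't quite work since $[T_i^\flat]$ depends on $U_i$; instead use $\frac{\partial}{\partial U_i}$ acting on functions of the $V_j$, for which $\frac{\partial V_j}{\partial U_i} = \delta_{ij}\cdot\frac{\partial}{\partial U_i}\!\left(-\tfrac1t\log(1+U_i/[T_i^\flat])\right)$; one checks this equals $-\delta_{ij}\cdot\frac1{t(T_i)}\cdot\frac{\partial T_i}{\partial U_i}$, and together with $dT_i = d\!\left([T_i^\flat] + U_i\right)$ being handled via the chain rule yields $\iota^{-1}d = -\sum_{i=1}^d \frac{\partial}{\partial V_i}\otimes\frac{d\log T_i}{t}$, where the sign comes from $dV_i = \frac{d\log T_i}{t}$ combined with $\frac{\partial}{\partial U_i} = \frac{\partial V_i}{\partial U_i}\frac{\partial}{\partial V_i}$ and the negative leading coefficient of $V_i$ in $U_i$.

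The main obstacle I anticipate is bookkeeping the completions: one must be careful that passing from $\mathbb B^+_{\dR}[[U_\bullet]]$ (with the $(t,U_\bullet)$-adic filtration, complete) to the $\Fil^0$ piece of $\mathcal O\mathbb B_{\dR}$ (a $t$-adic completion after inverting $t$) really produces the Tate-algebra-style ring $\mathbb B^+_{\dR|X_\infty}\langle V_\bullet\rangle$ and not something larger or smaller, and that the variable change $U_i \leftrightarrow tV_i$ (up to the unit $[T_i^\flat]$ and the $\log$ correction) is continuous for the relevant topologies; the connection computation itself is a short chain-rule exercise once the ring identification is pinned down.
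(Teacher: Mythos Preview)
Your approach—deduce everything from Proposition \ref{obdr+} by the change of variables $U_i \rightsquigarrow V_i$, identify $\Fil^0\OO\BB_{\dR}$ with the $t$-adically restricted power series ring, and compute the connection by the chain rule—is exactly what the paper intends; indeed the paper states this corollary with no proof at all, so you have supplied more than it does.

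One correction in the connection computation: you inserted a spurious minus sign when writing $dV_i = d\bigl(-\tfrac{1}{t}\log\tfrac{[T_i^\flat]}{T_i}\bigr)$. With the correct $V_i = \tfrac{1}{t}\log\tfrac{[T_i^\flat]}{T_i}$ and $d[T_i^\flat]=dt=0$, one gets directly
\[
dV_i = \tfrac{1}{t}\,d\log\tfrac{[T_i^\flat]}{T_i} = -\tfrac{d\log T_i}{t},
\]
and then $df = \sum_i \tfrac{\partial f}{\partial V_i}\,dV_i = -\sum_i \tfrac{\partial f}{\partial V_i}\otimes\tfrac{d\log T_i}{t}$ gives the asserted formula immediately. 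Your secondary explanation invoking ``the negative leading coefficient of $V_i$ in $U_i$'' is unnecessary and, with your (incorrect) value $dV_i=+\tfrac{d\log T_i}{t}$, would actually produce the wrong sign.
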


\begin{cor}\label{Poincare}
The de Rham sequence $0\to\mathbb B_\dR^+\to DR(\mathcal O\mathbb B_\dR^+,d)$ is exact. Similarly, the de Rham sequence $0\to\mathbb B_\dR^+\to DR(\mathcal O\mathbb B_\dR^{[0,\infty]},d)$ is exact.
\end{cor}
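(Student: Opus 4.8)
The plan is to reduce the statement to the classical Poincaré lemma for formal power series rings, using the explicit local description provided by Proposition \ref{obdr+} and Corollary \ref{obdr}. Since exactness of a complex of sheaves on $X_{1,v}$ can be checked on a basis, and by Proposition \ref{etsite:more} we may cover $X$ by opens admitting toric charts, it suffices to work locally assuming $X$ admits a toric chart $\psi:X\to\mathbb T^d$. Then by Proposition \ref{obdr+} we have an identification of $\mathcal O\mathbb B^+_{\dR|X_\infty}$ with $\mathbb B^+_{\dR|X_\infty}[[U_1,\dots,U_d]]$ under which $d$ becomes $\sum_{i=1}^d \frac{\partial}{\partial U_i}\otimes dT_i$, i.e.\ the de Rham complex $DR(\mathcal O\mathbb B^+_\dR,d)$ is, locally on $X_\infty$, the Koszul-type complex of the commuting operators $\partial/\partial U_i$ acting on $\mathbb B^+_{\dR|X_\infty}[[U_1,\dots,U_d]]$, with $\mathbb B^+_{\dR|X_\infty}$ sitting in degree zero as the constants.

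The key computational step is then the formal Poincaré lemma: for a ring $R$ (here the sections of $\mathbb B^+_{\dR|X_\infty}$, over $\mathbb Q$) the complex $R[[U_1,\dots,U_d]]\xrightarrow{d}\Omega^1\xrightarrow{d}\cdots$ of the $\partial/\partial U_i$ is a resolution of $R$. This is the standard argument: one builds a homotopy operator $h$ by integrating in each $U_i$ variable (formally, $U_i^{k}\mapsto U_i^{k+1}/(k+1)$, which makes sense since we are in characteristic zero), and checks $dh+hd=\mathrm{id}$ off degree zero while $\ker(d$ in degree $0)=R$; one must note the $(t,U_1,\dots,U_d)$-adic filtration is preserved so that the homotopy converges on the completion. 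For the second assertion one runs the same argument with Corollary \ref{obdr} in place of Proposition \ref{obdr+}: there $\mathcal O\mathbb B^{[0,\infty]}_{\dR|X_\infty}\cong \mathbb B^+_{\dR|X_\infty}\langle V_1,\dots,V_d\rangle$ (the graded/divided-power-type completion) and $d=-\sum \partial/\partial V_i\otimes d\log T_i/t$, so the same formal homotopy in the $V_i$ applies, now on the appropriate completion, again giving a resolution of $\mathbb B^+_{\dR|X_\infty}$. Finally, one must descend: the local resolutions over $X_\infty$ are $\Gamma$-equivariant and compatible with the transition maps, and since taking invariants / the decompletion along the toric tower (Proposition \ref{de}, Proposition \ref{kl}) is compatible with these complexes, exactness over $X_\infty$ gives exactness of $0\to\mathbb B^+_\dR\to DR(\mathcal O\mathbb B^+_\dR,d)$ on $X_{1,v}$; then sheafify.

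The main obstacle I expect is bookkeeping rather than conceptual: one has to be careful that the homotopy operator is compatible with the relevant completions (the $(t,U_\bullet)$-adic one in the first case, the filtration defining $\mathcal O\mathbb B_\dR$ restricted to $[0,\infty]$ in the second), so that it genuinely descends to the completed complex and still satisfies $dh+hd=\mathrm{id}$; and that the whole construction is local enough to glue over the toric-chart cover and pass through the $v$-sheafification without losing exactness. A secondary point is to make sure the identification of $DR(\mathcal O\mathbb B^+_\dR,d)$ with the $\partial/\partial U_i$-complex is as complexes (not just term by term), which is immediate from the formula $\iota^{-1}d=\sum \partial/\partial U_i\otimes dT_i$ in Proposition \ref{obdr+} together with $d\circ d=0$.
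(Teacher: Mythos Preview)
Your proposal is correct and follows exactly the approach the paper takes: the paper's entire proof is the single line ``Check on toric charts,'' and what you have written is precisely the unpacking of that sentence via Proposition~\ref{obdr+} and Corollary~\ref{obdr} together with the formal Poincar\'e lemma. One small remark: the final paragraph about ``descending'' via $\Gamma$-equivariance and decompletion is unnecessary---exactness of a complex of sheaves on $X_{1,v}$ is a local property, so once you have it on (sections over affinoid perfectoids lying over) $X_\infty$, you are done; there is no need to invoke Proposition~\ref{de} or Proposition~\ref{kl} here.
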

\begin{proof}
Check on toric charts.
\end{proof}

The above construction is compatible with base change of $S\in\Perf_C$ in the following sense.

\begin{prop}\label{ext}
For any morphism $f:S'\to S\in \Perf_C$, the natural morphism
\[
f^{-1}\OO\BB_{\dR,X/S}^+\widehat\otimes_{\mathbb B_\dR^+(S)}\mathbb B_\dR^+(S')\to\OO\BB_{\dR,X_{S'}/S'}^+
\]
is an isomorphism compatible with filtration and connection.
\end{prop}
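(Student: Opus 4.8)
The statement is local on $X_\et$, and compatibility of the isomorphism with filtration and connection can be checked locally too, so the plan is to reduce to the case where $X$ admits a toric chart $\psi: X\to\mathbb T^d$ over $B_\dR^+(S)$. First I would fix such a chart; base change along $f$ then gives a toric chart $\psi_{S'}: X_{S'}\to\mathbb T^d_{S'}$, since the complete toric tower of Section \ref{toric} is itself built by applying $\mathbb B_\dR^{+,?}(-)$ to perfectoid spaces over the relevant base, and the construction $\hat{\bar{\mathbb T}}^d_\infty$, $\widehat T^d_\infty$ is manifestly compatible with $f$. Concretely, $\widehat T^{d}_{\infty}(S')=\widehat T^{d}_{\infty}(S)\widehat\otimes_{B_\dR^+(S)}B_\dR^+(S')$, and similarly $\widehat A_\infty(S')\cong \widehat A_\infty(S)\widehat\otimes_{B_\dR^+(S)}B_\dR^+(S')$ using the base-change isomorphism $\widehat A_\infty\cong A\otimes_{T^d}\widehat T^d_\infty$ of \cite[Proposition 2.9]{Yu}.

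Next I would invoke Proposition \ref{obdr+}: on $X_\infty$ we have the explicit description
\[
\OO\BB^+_{\dR|X_\infty}\;\cong\;\mathbb B^+_{\dR|X_\infty}[[U_1,\ldots,U_d]],\qquad U_i\mapsto T_i-[T_i^\flat],
\]
as filtered sheaves, with $d=\sum_i \partial_{U_i}\otimes dT_i$. The same description holds over $S'$ with the same formulas, because the coordinates $T_i$ and the Teichmüller lifts $[T_i^\flat]$ are pulled back from the chart, hence match under $f$. Therefore the natural map in the statement, after pulling back to $X_\infty$ (which is a $v$-cover-cofinal pro-étale cover, so it suffices to check there), is identified with the obvious isomorphism
\[
\bigl(\mathbb B^+_{\dR|X_\infty}(S)[[U_1,\ldots,U_d]]\bigr)\widehat\otimes_{\mathbb B_\dR^+(S)}\mathbb B_\dR^+(S')\;\xrightarrow{\ \sim\ }\;\mathbb B^+_{\dR|X_\infty}(S')[[U_1,\ldots,U_d]],
\]
which is visibly compatible with the $(t,U_1,\ldots,U_d)$-adic filtrations and with the connection $\sum_i\partial_{U_i}\otimes dT_i$. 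The one genuine point to check is that completed tensor product commutes with the power-series completion here; this is where I would spend a little care, writing $\mathbb B^+_{\dR}[[U_\bullet]]=\ilim_j \mathbb B^+_{\dR}[U_\bullet]/(U_\bullet)^j$ and using that $\mathbb B_\dR^+(S')$ is, as a $\mathbb B_\dR^+(S)$-module, built from the perfectoid $S'$ in a way compatible with the finite-length truncations (all the modules in sight are $t$-adically complete and the filtration quotients are honest coherent sheaves on the perfectoid level, where base change is standard).

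Finally I would descend from $X_\infty$ back to $X$: the isomorphism on $X_\infty$ is $\Gamma$-equivariant because the $\Gamma$-action on both sides is determined by the action on $[T_i^\flat]$ (via $\gamma_i[(T_j^\flat)^{1/p^n}]=[\epsilon^{\delta_{ij}/p^n}][(T_j^\flat)^{1/p^n}]$), which is pulled back from the chart and hence $f$-compatible; so the map of Proposition \ref{ext} is recovered by taking $\Gamma$-invariants (or rather by $v$-descent along $X_\infty\to X$), and compatibility with filtration and connection descends along with it. The main obstacle, such as it is, is purely bookkeeping: making the identification of $f^{-1}\OO\BB^+_{\dR,X/S}\widehat\otimes B_\dR^+(S')$ with $\OO\BB^+_{\dR,X_{S'}/S'}$ genuinely canonical — i.e. independent of the chart — which one gets for free once one checks the two descriptions agree on overlaps of charts, again reducing to the naturality of the formulas $U_i\mapsto T_i-[T_i^\flat]$ and to \cite[Proposition 2.9]{Yu}. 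I do not expect any serious analytic difficulty beyond the interchange of $\widehat\otimes$ with the $\ker\theta_Y$-adic completion in the definition of $\OO\BB^+_{\dR}$.
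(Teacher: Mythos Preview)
Your proposal is correct and follows essentially the same route as the paper: reduce by locality to the case of a toric chart, then combine the explicit description of Proposition~\ref{obdr+} with a base-change input from \cite{Yu}. The paper's proof is literally those two steps in two lines, the only cosmetic difference being that it invokes \cite[Theorem~4.6]{Yu} (the base-change statement for the sheaf $\mathbb B_\dR^+$ itself) rather than \cite[Proposition~2.9]{Yu}; your more detailed account of the descent along $X_\infty\to X$ and the interchange of $\widehat\otimes$ with the $\ker\theta$-adic completion is a sound elaboration of what the paper leaves implicit.
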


\begin{proof}
Assume $X$ admits a toric chart by locality. We then deduce by 
Proposition \ref{obdr+} and \cite[Theorem 4.6]{Yu}.
\end{proof}
\begin{cor}
There is a natural isomorphism compatible with connection
\[
f^{-1}\OO\BB_{\dR,X/S}^{[0,+\infty]}\widehat\otimes_{\mathbb B_\dR^+(S)}\mathbb B_\dR^+(S')\to\OO\BB_{\dR,X_{S'}/S'}^{[0,+\infty]}.
\]
\end{cor}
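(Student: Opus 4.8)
The plan is to reduce the statement for $\OO\BB_{\dR,X/S}^{[0,+\infty]}$ to the already-established Proposition \ref{ext} for $\OO\BB_{\dR,X/S}^+$ by a careful bookkeeping of filtrations and inversion of $t$. Recall that by construction $\OO\BB'_{\dR} = \OO\BB_\dR^+[t^{-1}]$ with the filtration $\Fil^j\OO\BB'_{\dR} = \sum_{i\in\mathbb Z}t^i\Fil^{j-i}\OO\BB_\dR^+$, and $\OO\BB_\dR$ is its completion along that filtration; then $\OO\BB_\dR^{[0,+\infty]} = \Fil^0\OO\BB_\dR$. Since inverting $t$ commutes with $f^{-1}$ and with the completed tensor product $\widehat\otimes_{\mathbb B_\dR^+(S)}\mathbb B_\dR^+(S')$ (as $t$ maps to $t$ under the base change $\mathbb B_\dR^+(S)\to\mathbb B_\dR^+(S')$), and since the formation of the filtration $\Fil^j\OO\BB'_{\dR}$ is manifestly functorial in the underlying filtered algebra, the isomorphism of Proposition \ref{ext} induces a filtered isomorphism $f^{-1}\OO\BB'_{\dR,X/S}\widehat\otimes_{\mathbb B_\dR^+(S)}\mathbb B_\dR^+(S')\xrightarrow{\sim}\OO\BB'_{\dR,X_{S'}/S'}$, hence after completing along the filtration an isomorphism of the $\OO\BB_\dR$'s, and then passing to $\Fil^0$ gives the claim. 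Compatibility with the connection is inherited at each stage because $d$ is defined on $\OO\BB_\dR^+$ (and on $\OO\BB_\dR$) $\mathbb B_\dR^+$-linearly via the universal derivation $A\to A\otimes\Omega_X$, which is compatible with the base change $X_{S'}\to X_S$.

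Concretely, I would first reduce to the case that $X$ admits a toric chart, using locality of all the sheaves involved, exactly as in the proof of Proposition \ref{ext}. On such a chart Corollary \ref{obdr} gives the explicit description $\OO\BB_{\dR|X_\infty}^{[0,+\infty]}\cong \mathbb B^+_{\dR|X_\infty}\langle V_1,\dots,V_d\rangle$ with $V_i = \frac1t\log\frac{[T_i^\flat]}{T_i}$, and by \cite[Theorem 4.6]{Yu} the formation of $\mathbb B_\dR^+$ of the perfectoid $\overline X_\infty$ commutes with base change along $f$, i.e. $f^{-1}\mathbb B^+_{\dR|X_\infty}(S)\widehat\otimes_{\mathbb B_\dR^+(S)}\mathbb B_\dR^+(S')\cong \mathbb B^+_{\dR|X_{S',\infty}}(S')$; since the variables $V_1,\dots,V_d$ (being built from $t$ and the $[T_i^\flat]$, all of which are preserved by $f$) are the same on both sides, taking the completed free polynomial algebra on $d$ generators on both sides yields the desired isomorphism, and the formula $\iota^{-1}d = -\sum_{i=1}^d \frac{\partial}{\partial V_i}\otimes\frac{d\log T_i}{t}$ shows compatibility with the connection since the right-hand side is literally the same differential operator before and after base change.

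The main obstacle I anticipate is the interaction between the completed tensor product $\widehat\otimes$ and the filtration-completion defining $\OO\BB_\dR$: one must check that completing $\OO\BB'_\dR\widehat\otimes\mathbb B_\dR^+(S')$ along the pushed-forward filtration agrees with first completing $\OO\BB'_\dR$ (to get $\OO\BB_\dR$) and then base-changing, i.e. that the two completion procedures commute. This is where I would be most careful: because $t$ is not a zero-divisor and the graded pieces $\gr^j\OO\BB'_\dR$ behave well, one expects the filtration on $\OO\BB_\dR^{[0,+\infty]}$ to be complete and exhaustive with nice graded pieces (finite free over $\widehat{\mathcal O}_{\overline X}$-type sheaves in the toric picture by Corollary \ref{obdr}), so that $\widehat\otimes$ is exact on the relevant modules and commutes with the inverse limit defining the completion; the toric description makes this transparent, so in practice the chart reduction does most of the work and the abstract commutation is a formal consequence of flatness of $\mathbb B_\dR^+(S')$ over $\mathbb B_\dR^+(S)$ modulo each $t^\alpha$ together with \cite[Theorem 4.6]{Yu}. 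Once this point is settled, compatibility with the connection and with the filtration is routine, and the corollary follows.
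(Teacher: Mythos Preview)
Your proposal is correct and matches the paper's approach: the paper gives no separate proof of this corollary, treating it as immediate from Proposition~\ref{ext}, and your second paragraph (reduce to a toric chart and invoke Corollary~\ref{obdr} together with \cite[Theorem~4.6]{Yu}) is exactly the parallel of the paper's proof of Proposition~\ref{ext} with Corollary~\ref{obdr} in place of Proposition~\ref{obdr+}. Your concern about commuting the two completions is legitimate in the abstract formulation but, as you yourself note, is dissolved by the explicit toric description, which is precisely why the paper does not comment on it.
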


\begin{prop}\label{coh}
Let $\mathbb L$ be a $\mathbb B_\dR^+$-local system on $X_{1,v}$. For any 
$-\infty\leq a\leq b\leq\infty$ and any affinoid perfectoid $U\in X_{1,v}$,
\[H^i(U, \mathbb L \otimes_{\mathbb B_\dR^+} \obdr^{[a,b]})=0,\quad i>0.\]
\end{prop}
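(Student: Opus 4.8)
The plan is a dévissage along the filtration $\Fil^\bullet\obdr$, reducing everything to the acyclicity of $\widehat{\mathcal O}_{\overline X}$-vector bundles on affinoid perfectoid spaces. Since $t\in\Fil^1\obdr$ (indeed $t\in\Fil^1\mathbb B_\dR^+$), multiplication by $t$ annihilates each $\gr^j\obdr$, so $\gr^j\obdr$ is a sheaf of $\widehat{\mathcal O}_{\overline X}$-modules; and because $\mathbb L$ is v-locally free over $\mathbb B_\dR^+$ (so that $\mathbb L\otimes_{\mathbb B_\dR^+}-$ is exact and $\mathbb L\otimes\Fil^\bullet\obdr$ is a filtration with the expected graded pieces), we get
\[
\gr^j\bigl(\mathbb L\otimes_{\mathbb B_\dR^+}\obdr\bigr)\;\cong\;\overline{\mathbb L}\otimes_{\widehat{\mathcal O}_{\overline X}}\gr^j\obdr,
\]
where $\overline{\mathbb L}=\mathbb L/t\mathbb L$ is a $\widehat{\mathcal O}_{\overline X}$-vector bundle on $X_{1,v}$. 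I would then invoke the computation of the graded pieces of $\obdr$ (due to Scholze, cf.\ \cite{S2}; locally it is visible from Proposition \ref{obdr+} and Corollary \ref{obdr}, and the local descriptions glue): $\gr^j\obdr$ is a countable direct sum of the sheaves $\widehat{\mathcal O}_{\overline X}\otimes_{\mathcal O_{\overline X}}\Sym^k\Omega^1_{\overline X}$, $k\ge 0$, up to Tate twists. Hence $\overline{\mathbb L}\otimes_{\widehat{\mathcal O}_{\overline X}}\gr^j\obdr$ is a filtered colimit, over finite partial sums, of sheaves $\overline{\mathbb L}\otimes_{\widehat{\mathcal O}_{\overline X}}\Sym^k\Omega^1_{\overline X}$, each of which is again a $\widehat{\mathcal O}_{\overline X}$-vector bundle on $X_{1,v}$. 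On an affinoid perfectoid $U$ such a bundle is a direct summand of a finite free $\widehat{\mathcal O}_{\overline X}$-module over $U$, so it has vanishing higher cohomology on $U$, and since $U$ is qcqs, cohomology commutes with the colimit. This gives $H^i(U,\gr^j(\mathbb L\otimes\obdr))=0$ for $i>0$, and likewise $H^i(U,\gr^j(\mathbb L\otimes\obdr^{[a,b]}))=0$ for $a\le j\le b$.

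With the graded case in hand, the case of finite $a\le b$ follows by dévissage: $\mathbb L\otimes\obdr^{[a,b]}$ carries the finite filtration $\mathbb L\otimes\Fil^\bullet\obdr^{[a,b]}$ whose successive quotients are the $\mathbb L\otimes\gr^j\obdr$ ($a\le j\le b$), all acyclic on $U$; the long exact sequences then yield $H^i(U,\mathbb L\otimes\obdr^{[a,b]})=0$ for $i>0$, and in particular the transition map $(\mathbb L\otimes\obdr^{[a,b+1]})(U)\to(\mathbb L\otimes\obdr^{[a,b]})(U)$ is surjective since its cokernel embeds into $H^1(U,\mathbb L\otimes\gr^{b+1}\obdr)=0$. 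For the corners involving $-\infty$, the filtration on $\obdr$ is still exhausted by the $\Fil^a\obdr$ after completion, so $\obdr^{[-\infty,b]}=\varinjlim_{a\to-\infty}\obdr^{[a,b]}$ and $\obdr=\varinjlim_a\obdr^{[a,\infty]}$ are filtered colimits of already-treated objects; since $U$ is qcqs and $\mathbb L$ is flat, cohomology commutes with these colimits and the vanishing persists. For $b=\infty$ with $a$ finite, $\mathbb L\otimes\obdr^{[a,\infty]}=\varprojlim_b\mathbb L\otimes\obdr^{[a,b]}$ has surjective transition maps on sections over the basis of affinoid perfectoids (by the previous step) and acyclic terms on $U$, so $R\Gamma(U,\mathbb L\otimes\obdr^{[a,\infty]})=R\varprojlim_b R\Gamma(U,\mathbb L\otimes\obdr^{[a,b]})$ is concentrated in degree $0$ by Mittag-Leffler; the remaining corner $[-\infty,\infty]$ follows by combining the two limit arguments.

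The only step where more than formal homological algebra enters is the structural description of $\gr^j\obdr$ as a colimit of étale vector bundles induced up along $\mathcal O_{\overline X}\to\widehat{\mathcal O}_{\overline X}$ — this is what makes the graded pieces $\widehat{\mathcal O}_{\overline X}$-vector bundles, hence acyclic on affinoid perfectoids, and it is the exact analogue of Scholze's proof that $H^{>0}(U,\obdr)=0$. The other delicate point is interchanging cohomology with the inverse limit when $b=\infty$; this rests on the surjectivity of the transition maps on sections over a basis of affinoid perfectoids, which is precisely what the finite-$[a,b]$ case delivers.
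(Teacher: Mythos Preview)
Your proposal is correct and follows essentially the same approach as the paper: reduce to the single graded piece via d\'evissage along $\Fil^\bullet\obdr$, write that graded piece as a filtered colimit of $\widehat{\mathcal O}_{\overline X}$-vector bundles, and invoke acyclicity of such bundles on affinoid perfectoids together with qcqs to pass through the colimit. The paper packages the graded piece via the Faltings extension $\mathcal F$ (writing $\gr^0\obdr\cong\varinjlim_n\Sym^n_{\widehat{\mathcal O}_{\overline X}}\mathcal F$) rather than as a direct sum of $\widehat{\mathcal O}_{\overline X}\otimes\Sym^k\Omega^1_{\overline X}$; since globally the Faltings extension need not split, your ``direct sum'' should strictly be read as a finite filtration with those associated gradeds, but this does not affect the argument and you already phrase the key step as a filtered colimit of vector bundles, which is exactly what is needed.
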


\begin{proof}
By the same induction argument as in \cite[Lemma 2.4]{LZ} and \cite[Lemma 3.18]{S2}, we are reduced to the case $a=b$. By shift, we may further assume $a = b = 0$.

Taking the $\gr^1$-piece of the de Rham sequence in Corollary \ref{Poincare}, we get a short exact sequence
\[0\to \widehat{\mathcal{O}}_{\overline X}(1)\to \gr^1\mathcal{O}\mathbb{B}_{\dR}^+\to \widehat{\mathcal{O}}_{\overline X}\otimes\Omega^1_{\overline X}\to 0\]
and thus a short exact sequence
\[0\to \widehat{\mathcal{O}}_{\overline X}\to \mathcal{F}\to \widehat{\mathcal{O}}_{\overline X}\otimes\Omega^1_{\overline X}(-1)\to 0,\]
where $\mathcal F = \left(\gr^1\mathcal{O}\mathbb{B}_{\dR}^+\right)(-1)$ denotes the Faltings's extension.
As in \cite[Remark 2.1]{LZ}, we have an isomorphism
\[\gr^0 \mathcal O \mathbb B_\dR\cong \dlim_n \Sym^n_{\widehat{\OO}_{\overline X}} \mathcal F.\]
As the right-hand-side above is a direct limit of $\widehat{\OO}_{\overline X}$-bundles, by the quasi-compactness of $U$,
\[
H^i(U,\mathbb L\otimes_{\mathbb B_\dR^+} \gr^0\obdr)= \dlim_nH^i(U,\mathbb L_1\otimes\Sym^n_{\widehat{\OO}_{\overline X}} \mathcal F).
\]
By \cite[Proposition 8.8]{S3} and \cite[Theorem 3.5.8]{kl2}, for any $i\geq 1$, we have 
\[H^i(U,\mathbb L_1\otimes\Sym^n_{\widehat{\OO}_{\overline X}} \mathcal F)= 0\]
which implies that 
\[H^i(U,\mathbb L\otimes_{\mathbb B_\dR^+} \gr^0\obdr) = 0\]
as desired.
\end{proof}

\begin{cor}[\emph{\cite[Theorem 2.38]{Yu}}]\label{pr}
We have

(1) $R\nu_*\obdr^{[a,b]}=\OO_{X}[\frac{1}{t}]^{[a,b]}$, where we define $\Fil^j(\OO_{X}[\frac{1}{t}])=t^j\OO_{X}$.

(2) For any vector bundle $\mathcal E$ on $X$,
\[R\nu_*\bigl(\nu^{-1}\mathcal{E}\otimes_{\nu^{-1}\OO_{X}} \obdr^{[0,+\infty]}\bigr)=\mathcal{E}.\]
\end{cor}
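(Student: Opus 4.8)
The plan is to reduce both statements to a local computation on a toric chart, where everything is controlled by the explicit descriptions of $\obdr^{[a,b]}$ from Proposition \ref{obdr+} and Corollary \ref{obdr}. By the projection formula and the fact that $\nu$-cohomology and $\et$-cohomology of affinoids can be computed locally, it suffices to treat the case where $X = \Spa(A,A^+)$ admits a toric chart $X \to \mathbb T^d$, with the associated tower $X_\infty \to X$ having Galois group $\Gamma \cong \mathbb Z_p(1)^d$. For (1), the key input is that $R\nu_*$ of a sheaf on $X_{1,v}$ admitting a $\Gamma$-equivariant description on $X_\infty$ is computed by continuous group cohomology $R\Gamma_{\mathrm{cont}}(\Gamma, -)$ of the sections over $X_\infty$, via Proposition \ref{kl} (or rather its proof through \cite{kl2}) together with the almost-acyclicity of $\widehat A_\infty$ relative to the tower. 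Combining this with Corollary \ref{obdr}, which identifies $\obdr^{[0,\infty]}_{|X_\infty}$ with $\widehat{\mathbb B}^+_{\dR|X_\infty}\langle V_1,\dots,V_d\rangle$, we must compute $R\Gamma_{\mathrm{cont}}(\Gamma, \widehat A_\infty\langle V_1,\dots,V_d\rangle)$; the element $V_i$ transforms by $\gamma_j V_i = V_i - \delta_{ij}$ (up to the appropriate normalization by $t$ and $\log[\epsilon]$), so the Koszul-type complex computing this cohomology is, after the change of variables sending $V_i$ to the generator whose $\gamma_i$-action is translation, precisely the one that produces $A[\tfrac1t]^{[a,b]}$ with the filtration $\Fil^j = t^j\OO_X$. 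This is the standard ``the polynomial variables kill the $\gamma_i-1$ operators'' phenomenon, exactly as in Scholze's original computation of $\nu_*\obdr$.

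For part (2), I would first note that $\nu^{-1}\mathcal E \otimes \obdr^{[0,\infty]}$ carries a filtration whose graded pieces are $\nu^{-1}\mathcal E \otimes \gr^\bullet\obdr$, and by Proposition \ref{coh} (with $\mathbb L = \OO$, or more precisely applied after passing to a local system trivialization of $\mathcal E$) the higher $\nu_*$ of each graded piece tensored with $\mathcal E$ vanishes on affinoid perfectoids, while $\nu_*\gr^0\obdr = \widehat{\OO}_{\overline X}$ and the de Rham/Poincaré sequence of Corollary \ref{Poincare} pins down $\nu_*$ on the nose. Concretely: using the isomorphism $\iota$ of Proposition \ref{obdr+}, an element of $\nu^{-1}\mathcal E\otimes\obdr^{+}$ that is horizontal for $d = \sum \partial/\partial U_i \otimes dT_i$ must be a power series in the $U_i$ with coefficients forced to be constant (all higher coefficients vanish by horizontality, order by order), hence lies in $\mathcal E$ itself; the exactness statement in Corollary \ref{Poincare} upgrades this to the derived statement. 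One then checks the identification is independent of the chart and glues, which is formal since all maps in sight are the canonical ones.

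The main obstacle I anticipate is the bookkeeping for the filtrations and the interaction between the $t$-adic and $U$-adic (resp. $V$-adic) directions: in part (1) one must verify that the filtration on $R\nu_*\obdr^{[a,b]}$ coming from $\Fil^\bullet\obdr$ matches $\Fil^j(\OO_X[\tfrac1t]) = t^j\OO_X$, and because $\obdr$ is the completion of $\obdr'$ along a filtration mixing $t$ and the geometric variables, one has to be careful that cohomology commutes with the relevant limits — this is where quasi-compactness of $U$ and the $\ilim$ over $j,\alpha$ in the definition of $\obdr^+$ enter, and where an argument in the style of \cite[Lemma 2.4]{LZ} or \cite[Lemma 3.18]{S2} is needed to control the transition maps. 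The cohomology-vanishing inputs themselves are already packaged in Proposition \ref{coh}, so modulo this filtration-compatibility check the proof is a matter of assembling Corollaries \ref{obdr+ } through \ref{Poincare}; accordingly I would expect the written proof to simply say ``check on toric charts, using Corollary \ref{obdr} and Proposition \ref{coh}, exactly as in \cite[Theorem 2.38]{Yu}.''
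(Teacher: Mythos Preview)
Your proposal is correct and follows exactly the standard route; moreover, you correctly anticipated the paper's treatment in your final sentence. The paper does not give an independent proof of this corollary at all: it is stated with the citation \cite[Theorem 2.38]{Yu} in the header and no proof environment follows, so the ``paper's proof'' is literally a deferral to Yu. Your sketch (reduce to a toric chart, use the explicit description of $\obdr^{[0,\infty]}$ from Corollary \ref{obdr}, compute the $\Gamma$-cohomology of $\widehat A_\infty\langle V_1,\dots,V_d\rangle$ with the translation action on the $V_i$, and control the filtration via Proposition \ref{coh}) is precisely the argument behind the cited theorem.

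One minor remark on your treatment of part (2): the passage invoking horizontal sections and the Poincar\'e lemma is slightly misdirected --- that computes $(\nu^{-1}\mathcal E\otimes\obdr^{[0,\infty]})^{\nabla=0}$, not $R\nu_*$. The cleaner route, which you already mentioned at the outset, is the projection formula: since $\mathcal E$ is locally free on $X_\et$, one has $R\nu_*(\nu^{-1}\mathcal E\otimes\obdr^{[0,\infty]})\cong\mathcal E\otimes_{\OO_X}R\nu_*\obdr^{[0,\infty]}$, and then part (1) with $[a,b]=[0,\infty]$ gives $R\nu_*\obdr^{[0,\infty]}=\OO_X$. This is not a gap in your argument, just a simplification.
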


\section{Proof of Main Theorem}

We now give the following equivalence of categories:
\begin{thm}\label{thm:categorical RH}
Let $X$ be a smooth adic space over $B_\dR^+$. Then the following functors
\begin{align*}
\ls(X)^0& \longleftrightarrow \tmic (X)^0 \\ \mathbb L& \mapsto \nu_* \bigl(\mathbb L\otimes_\bdr\Fil^0\obdr, d\otimes1 \bigr) \\
\bigl(\nu^{-1}\mathcal E\otimes_{\nu^{-1} \OO_X} \Fil^0 \obdr\bigr)^{\nabla\otimes d=0}&\mapsfrom (\mathcal E,\nabla)
\end{align*}
induce an equivalence of categories such that for any $\mathbb{B}_{\dR}^+$-local system $\mathbb L$ with corresponding integrable connection $(\mathcal E,\nabla)$ via the above equivalence, there is a quasi-isomorphism
\[
R\nu_*\mathbb L=DR\bigl(\mathcal E,\nabla\bigr).
\]
\end{thm}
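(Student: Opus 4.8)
Both assertions are local on $X_\et$, so by Proposition \ref{etsite:more} we may assume $X=\Spa(A,A^+)$ is affinoid and admits a fixed toric chart $X\to\mathbb T^d$, and we keep the notation of Section \ref{toric}. The first step is to make both functors explicit in these coordinates. By Corollary \ref{obdr} there is, over $X_\infty$, an isomorphism of filtered $\bdr$-algebras
\[
\Fil^0\obdr|_{X_\infty}\;\cong\;\widehat A_\infty\langle V_1,\dots,V_d\rangle
\]
under which $\Gamma=\langle\gamma_1,\dots,\gamma_d\rangle$ acts by $\gamma_i(V_j)=V_j+\delta_{ij}$ (together with its action on $\widehat A_\infty$) and the connection is $d=-\sum_i\partial_{V_i}\otimes t^{-1}d\log T_i$. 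Feeding this into the decompletion theory of Propositions \ref{de} and \ref{kl}: a $\bdr$-local system $\mathbb L$ of rank $r$ corresponds to a finite projective $\widehat A_\infty$-module $M$ of rank $r$ with continuous semilinear $\Gamma$-action, and $\mathbb L\otimes_\bdr\Fil^0\obdr|_{X_\infty}\cong M\otimes_{\widehat A_\infty}\widehat A_\infty\langle V\rangle$ with the diagonal $\Gamma$-action; dually a $t$-MIC bundle $(\mathcal E,\nabla)$ of rank $r$ yields over $X_\infty$ the module $\mathcal E\otimes_A\widehat A_\infty$ together with the commuting operators $\theta_1,\dots,\theta_d$ determined by $\nabla(e)=\sum_i\theta_i(e)\otimes t^{-1}d\log T_i$. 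Under this dictionary the Sen operators of $M$ reduce modulo $t$ to the Higgs field components of the corresponding $t$-MIC bundle (cf.\ \cite{HX}), so the nilpotent locus on one side is carried to the nilpotent locus on the other.

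\textbf{The exponential twist.} The crux of the argument --- and the only place where the nilpotency hypothesis is genuinely used --- is the following. Let $\phi_1,\dots,\phi_d\in\End_{\widehat A_\infty}(M)$ be the Sen operators on the analytic vectors; they pairwise commute (as $\Gamma$ is abelian), and by hypothesis their reductions modulo $t$ are commuting nilpotent endomorphisms, hence simultaneously strictly triangularizable, so that every word of length $r$ in the $\phi_i$ lies in $t\End_{\widehat A_\infty}(M)$. An immediate induction then shows every word of length $k$ lies in $t^{\lfloor k/r\rfloor}\End_{\widehat A_\infty}(M)$, so that modulo each power of $t$ the series
\[
\Xi(V)\;:=\;\exp\Bigl(\sum_{i=1}^d V_i\phi_i\Bigr)
\]
is a polynomial in the $V_i$; since the factorials are units in $\bdr$, $\Xi(V)$ is a well-defined element of $\GL_r\bigl(\widehat A_\infty\langle V_1,\dots,V_d\rangle\bigr)$, with inverse $\exp(-\sum V_i\phi_i)$. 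Using the relation $\gamma_i(\Xi(V))=\exp(\phi_i)\,\Xi(V)$, conjugation by $\Xi(V)$ straightens the total connection $\nabla\otimes1+1\otimes d$ on $M\otimes_{\widehat A_\infty}\widehat A_\infty\langle V\rangle$ to the constant one $-\sum_i\partial_{V_i}\otimes t^{-1}d\log T_i$, and simultaneously intertwines the diagonal $\Gamma$-action with the one in which $\gamma_i$ acts only through $\widehat A_\infty$ and the shift $V_i\mapsto V_i+1$. On the straightened side both the $\Gamma$-invariants and the horizontal sections are transparent: one reads off that $\nu_*(\mathbb L\otimes_\bdr\Fil^0\obdr)$ is a finite projective $A$-module of rank $r$ with an integrable $t$-connection whose Higgs field is $(\phi_i\bmod t)_i$, hence nilpotent, and --- running the construction backwards from a $t$-MIC bundle $(\mathcal E,\nabla)$ --- that $\bigl(\nu^{-1}\mathcal E\otimes_{\nu^{-1}\OO_X}\Fil^0\obdr\bigr)^{\nabla\otimes d=0}$ is a rank-$r$ $\bdr$-local system with nilpotent Sen operator. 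The same formulas identify the two round-trips with the identity functors: in one direction this is the Poincaré lemma $(\Fil^0\obdr)^{d=0}=\bdr$ of Corollary \ref{Poincare}, in the other it is $\nu_*(\nu^{-1}\mathcal E\otimes_{\nu^{-1}\OO_X}\Fil^0\obdr)=\mathcal E$ from Corollary \ref{pr}(2). As none of these constructions depends on the chart, they glue to the asserted equivalence $RH^0$ for arbitrary smooth $X/\bdr$.

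\textbf{The de Rham comparison.} For the quasi-isomorphism $R\nu_*\mathbb L=DR(\mathcal E,\nabla)$ we argue as in the proof of Proposition \ref{coh} and in \cite{LZ}. By Corollary \ref{Poincare} the de Rham complex of $(\Fil^0\obdr,d)$ is a resolution of $\bdr$; since $\mathbb L$ is flat over $\bdr$, tensoring gives a resolution
\[
\mathbb L\;\xrightarrow{\ \sim\ }\;\Bigl[\,\mathbb L\otimes\Fil^0\obdr\to\mathbb L\otimes\Fil^{-1}\obdr\otimes_{\nu^{-1}\OO_X}\nu^{-1}\Omega^1_X\to\cdots\Bigr].
\]
Now apply $R\nu_*$. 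By Proposition \ref{coh} each term $\mathbb L\otimes\obdr^{[-k,\infty]}$ has vanishing higher direct images under $\nu$, and $\Omega^k_X$ is the $\nu$-pullback of a vector bundle, so by the projection formula every term of the complex is $\nu_*$-acyclic; hence $R\nu_*$ of the resolution is computed degreewise by $\nu_*$, and by the very construction of the functor $\mathbb L\mapsto(\mathcal E,\nabla)$ the resulting complex is $DR(\mathcal E,\nabla)$. The main obstacle throughout is the local computation of the second paragraph: producing and controlling the twist $\Xi(V)$ over $\widehat A_\infty\langle V\rangle$ --- which is exactly where one needs the Higgs field, equivalently the Sen operators, to be genuinely nilpotent rather than merely topologically nilpotent --- and checking that conjugation by it simultaneously trivializes the connection and linearizes the $\Gamma$-action; once this is in place, the gluing, the fully faithfulness, and the de Rham comparison are formal.
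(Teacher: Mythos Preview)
Your strategy via the exponential twist $\Xi(V)=\exp\bigl(\sum_i V_i\phi_i\bigr)$ is elegant and, for the direction $\tmic(X)^0\to\ls(X)^0$, essentially correct: there the operators $\theta_i\in\End_A(\mathcal E(X))$ are genuinely $A$-linear, pairwise commuting, and nilpotent modulo $t$, so the convergence argument works and $\Xi(V)$ identifies the horizontal sections of $\nu^{-1}\mathcal E\otimes_{\nu^{-1}\OO_X}\Fil^0\obdr$ with a finite projective $\widehat A_\infty$-module carrying the expected $\Gamma$-action. (One nitpick: ``simultaneously strictly triangularizable'' need not hold over a non-field base, but the conclusion you actually use---that length-$r$ words in the $\bar\theta_i$ vanish---can be checked at residue fields and hence holds globally since $\End(\mathcal E(X)/t)$ is projective.) This packages the content of the paper's Lemma \ref{nilpotent:connection} more explicitly.

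The gap is in the direction $\ls(X)^0\to\tmic(X)^0$. You assert that the Sen operators $\phi_i$ lie in $\End_{\widehat A_\infty}(M)$, but this is false at the $\bdr$-level: the $\Gamma$-action on $\widehat A_\infty$ itself is nontrivial (for instance $\gamma_i\bigl[(T_i^{\flat})^{1/p^n}\bigr]=[\epsilon^{1/p^n}]\cdot\bigl[(T_i^{\flat})^{1/p^n}\bigr]$ with $[\epsilon^{1/p^n}]=\exp(t/p^n)\neq 1$ in $B_\dR^+$), so $\vartheta_i=\lim_{u\to 0}(\gamma_i^u-1)/u$ is only a \emph{derivation} over $\widehat A_\infty$, not an $\widehat A_\infty$-linear endomorphism. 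Consequently $\Xi(V)$ is not an element of $\GL_r\bigl(\widehat A_\infty\langle V\rangle\bigr)$, the relation $\gamma_i(\Xi(V))=\exp(\phi_i)\,\Xi(V)$ has no meaning as written, and your triangularization and convergence arguments do not apply. What is missing is precisely the descent step carried out in the paper as Corollary \ref{analytic:nilpotent}: one first uses the decompletion system of Proposition \ref{de} together with a generalized-eigenspace decomposition (available because the Sen operators \emph{are} linear modulo $t$, as the derivation part lands in $t\widehat A_\infty$) to produce a model $\overline M$ over $\overline A$ on which $\Gamma$ acts \emph{linearly} and unipotently, and then climbs the $t$-adic tower by d\'evissage as in Theorem \ref{LS:MIC}. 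After that descent one does obtain honest $A$-linear commuting operators and your exponential construction would go through; the paper instead computes $\Gamma$-cohomology directly via Cartan--Leray and \cite[Lemma 2.10]{LZ} (Lemma \ref{pushforward}), which is an equivalent but more cohomological packaging of the same calculation. Your treatment of the de Rham comparison $R\nu_*\mathbb L\simeq DR(\mathcal E,\nabla)$ is correct and coincides with the paper's.
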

With this construction, we obtain Theorem \ref{main}. 

\begin{proof}[\textbf{Proof of Theorem \ref{main}}]
The rest is to check $f^*_\et\circ RH^0_S\cong RH^0_{S'}\circ f^*_v$ for any map $S'\xrightarrow{f} S$ in $\Perf_C$. This natural morphism is given by \begin{align*}
f^*_\et\nu_* \bigl(\mathbb L\otimes_{\mathbb B_\dR^+} \mathcal O \mathbb B_{\dR,X_S/S}^{[0,+\infty]} \bigr) \xrightarrow{\adj}& f^*_\et\nu_* f_*f^*_v\bigl(\mathbb L\otimes_{\mathbb B_\dR^+} \mathcal O \mathbb B_{\dR,X_S/S}^{[0,+\infty]} )\\=&f^*_\et f_*\nu_*\bigl (f^*_v \mathbb L\otimes_{\mathbb B_\dR^+}\mathcal O \mathbb B_{\dR,X_{S'}/S'}^{[0,+\infty]} )\\\xrightarrow{\adj}&\nu_*\bigl(f^*_v \mathbb L\otimes_{\mathbb B_\dR^+}\mathcal O \mathbb B_{\dR,X_{S'}/S'}^{[0,+\infty]} ) 
\end{align*}
Then by locality we assume $X$ admits a toric chart. And by \cite{kl2} we only need to check their global sections. Then our theorem follows from Proposition \ref{ext} and \cite[Proposition 2.9]{Yu}. It holds
\[RH^0_{S'}(f^*_v\mathbb L)(X_{S})=RH^0_S(\mathbb L(X_{S'}))\otimes_{B_\dR^+(S)}B_\dR^+(S')
\]
which means $\adj(X_S)$'s are isomorphisms. We get the desired theorem.
\end{proof}
\subsection{Local construction}

In this subsection assume $X$ admits a toric chart and take notations as in Subsection \ref{toric}.

\begin{lem}\label{analytic}
Let $\overline{\mathbb L}$ be an $\widehat{\mathcal O}_{\overline X}$-bundle on $X_{1,v}$.
\begin{enumerate}[label=(\alph*)]
\item There exists a finite projective $\overline A_\infty$-module $\overline M_\infty$ with a semilinear $\Gamma$-action, unique up to unique isomorphism, such that for any affinoid perfectoid $U$ over $X_{\infty}$, there is a natural isomorphism of modules with semilinear $\Gamma$-actions
\[ \overline{\mathbb L} \bigl( U \bigr) = \overline M_\infty \otimes_{\overline A_\infty} \widehat{\mathcal O}_{\overline X} \bigl( U \bigr). \]
Moreover, there exists $N \gg 1$ and some projective module $\overline M_N$ over $\overline A_N$ equipped with a semilinear $\Gamma$-action, such that
\[ \overline M_\infty \cong \overline M_N \otimes_{\overline A_N} \overline A_\infty. \]

\item With the construction of $\overline M_\infty$ above, $\overline{\mathbb L}$ is nilpotent if and only if the action of $\Gamma$ on $\overline M_\infty$ is quasi-unipotent, if and only if the action of $\Gamma$ on $\overline M_N$ is quasi-unipotent.
\end{enumerate}
\end{lem}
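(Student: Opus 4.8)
The plan is to build $\overline M_\infty$ by descent along the pro-étale Galois cover $\overline X_\infty \to \overline X$ and then apply Yu's decompletion to descend further to a finite level $N$. First I would invoke Proposition \ref{kl} (equivalently \cite[Theorem 3.5.8]{kl2}): since $\overline{\mathbb L}$ is an $\widehat{\mathcal O}_{\overline X}$-bundle of some rank $r$ on $X_{1,v}$ — which is the mod-$t$ case of a $\mathbb B^+_\dR$-local system — it corresponds to a pair $(\overline M_\infty, \rho)$ where $\overline M_\infty$ is a finite projective $\widehat A_\infty/t = \overline A_\infty$-module of rank $r$ with a continuous semilinear $\Gamma$-action, and the value on any affinoid perfectoid $U$ over $\overline X_\infty$ is recovered by $\overline{\mathbb L}(U) = \overline M_\infty \otimes_{\overline A_\infty} \widehat{\mathcal O}_{\overline X}(U)$; uniqueness up to unique isomorphism is part of that equivalence. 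For the descent to level $N$, I would apply Proposition \ref{de}: the triple $(\varinjlim_n A_{n,\alpha}, \widehat A_{\infty,\alpha}, \Gamma)$ is a strong decompletion system in the sense of \cite[Appendix A]{diao} for every $\alpha$, in particular for $\alpha = 1$, so the continuous semilinear representation on $\overline M_\infty$ descends: there exist $N \gg 1$ and a projective $\overline A_N$-module $\overline M_N$ with semilinear $\Gamma$-action such that $\overline M_\infty \cong \overline M_N \otimes_{\overline A_N} \overline A_\infty$ compatibly with the $\Gamma$-actions. (Projectivity of $\overline M_N$ follows because $\overline A_\infty$ is faithfully flat over $\overline A_N$ and $\overline M_\infty$ is finite projective, after first arranging $\overline M_N$ finitely presented, again by decompletion.)

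For part (b), the key is to relate nilpotence of $\overline{\mathbb L}$ — defined via the Higgs field $\theta_{\overline{\mathbb L}}$ of \cite[Th. 4.2.1]{HX} having vanishing characteristic polynomial coefficients — to quasi-unipotence of the $\Gamma$-action. Here I would use the explicit description of the Sen/Higgs operator in the toric setting: the arithmetic Sen operator (the associated graded of the Sen operator) is, up to the standard normalization, the "logarithm of the $\Gamma$-action" $\phi_{\overline{\mathbb L}} := \log(\gamma)$ computed on the locally analytic vectors, and more precisely the $d$ commuting operators $\log \gamma_i$ acting on $\overline M_N^{\Gamma\text{-an}}$ assemble into $\theta_{\overline{\mathbb L}}$ after contracting with $\frac{d\log T_i}{t}$ (this is exactly the content compatible with Corollary \ref{obdr} and the formula $\iota^{-1}d = -\sum_i \frac{\partial}{\partial V_i}\otimes \frac{d\log T_i}{t}$). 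Then $\theta_{\overline{\mathbb L}}$ being nilpotent (vanishing characteristic polynomial) is equivalent to each $\log\gamma_i$ being nilpotent on the relevant finite-dimensional pieces, which is precisely quasi-unipotence of the $\Gamma$-action (unipotent after restricting to an open subgroup, i.e. to some $\Gamma^{p^m}$). The equivalence between quasi-unipotence on $\overline M_\infty$ and on $\overline M_N$ is immediate since $\overline A_\infty$ is faithfully flat over $\overline A_N$ and the $\Gamma$-action on $\overline M_\infty$ is obtained by base change, so the operators $\log\gamma_i$ on the two modules have the same characteristic polynomials (coefficients in $\overline A_N$, mapping injectively into $\overline A_\infty$).

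I expect the main obstacle to be part (b): pinning down the precise dictionary between the Hitchin-base invariant $H_X(\overline{\mathbb L})$ — i.e. the characteristic polynomial of the Higgs field from \cite[Th. 4.2.1]{HX}, which is defined intrinsically on $X_{1,v}$ — and the characteristic polynomials of the logarithms of the toric generators $\gamma_i$ acting on $\overline M_N$. This requires identifying the Faltings extension computation of \cite{HX} with the explicit toric Sen theory, checking that the normalizations (the twist by $(-1)$, the factor of $t$, the sign in Corollary \ref{obdr}) match, and confirming that nilpotence can be tested after passing to locally analytic vectors at a finite level without losing information — the quasi-unipotence (rather than outright unipotence) appears exactly because one may need to shrink $\Gamma$ to an open subgroup to kill the "finite part" of the monodromy. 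The module-theoretic descent in part (a) is, by contrast, a routine application of Yu's and Diao's decompletion machinery once the right statements are cited.
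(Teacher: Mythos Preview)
Your approach is the same as the paper's, but there is one slip worth fixing: in the paper's notation $\overline A_\infty = \varinjlim_n \overline A_n$ is the \emph{uncompleted} direct limit, while $\widehat A_\infty/t = \hat{\bar A}_\infty = \widehat{\mathcal O}_{\overline X}(\overline X_\infty)$ is its completion, and these differ. Proposition~\ref{kl} only hands you $\hat{\bar M}_\infty$ over the completed ring $\hat{\bar A}_\infty$; it is precisely the decompletion of Proposition~\ref{de} that then produces $\overline M_\infty$ over $\overline A_\infty$ and, at a further finite level, $\overline M_N$ over $\overline A_N$ (the paper also enlarges $N$ once more so that the full $\Gamma$, not just $\Gamma^{p^N}$, preserves $\overline M_N$). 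For part~(b) the paper carries out exactly the computation you sketch, making it explicit by setting $\varphi_i = \gamma_i^N|_{\overline M_N}$, choosing $m\gg 0$ with $\|\varphi_i^{p^m}-I\|<p^{-1/(p-1)}$, and identifying $\vartheta_i|_{\overline M_N} = \log \varphi_i^{p^m}$, so that quasi-unipotence of the $\Gamma$-action is equivalent to nilpotence of the Sen operators.
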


\begin{proof}
Denote $\hat{\bar M}_\infty=\overline{\mathbb L} (\hat{\bar{U}} _\infty)$. By Proposition \ref{kl} this is a finite projective $\hat{\bar A}_\infty$-module with a semilinear $\Gamma$-action, such that for any affinoid perfectoid $U$ over $X_{\infty}$,
\[ \overline{\mathbb L} \bigl( U \bigr) = \hat{\bar M}_\infty \otimes_{\hat{\bar A}_\infty} \widehat{\mathcal O}_{\overline X} \bigl( U \bigr).\]

By Proposition \ref{de} there exists a finite projective $\overline A_\infty$-module $\overline M_\infty$ with a semilinear $\Gamma$-action such that $\hat{\bar M}_\infty \cong \overline M_\infty \otimes_{\overline A_\infty} \hat{\bar A}_\infty.$ Moreover, there exists some $N\gg 1$ and some projective module $\overline M_N$ over $\overline A_N$ such that $\overline M_\infty \cong \overline M_N \otimes_{\overline A_N} \overline A_\infty.$

Let $\bigl\{ m_j \bigr\}$ be a set of generators for $\overline M_N.$ There exists $N' \ge N$ such that
\[ \gamma_i \bigl( m_j \bigr) \in \overline M_N \otimes_{\overline A_N} \overline A_{N'} \]
for all $i,j.$ Replace $N$ by $N'$ and $\overline M_N$ by $\overline M_N \otimes_{\overline A_N} \overline A_{N'},$ we may assume that $\overline M_N$ is $\Gamma$-invariant. This completes the proof of part (a).

As for part (b), recall the definition of the geometric Sen operator
\[ \vartheta \bigl( x \bigr) = \sum_i \vartheta_i \bigl( x \bigr) \otimes \frac{\d \log T_i}{t}, \]
where
\[ \vartheta_i \bigl( x \bigr) = \lim_{u \in \mathbb Z_p \to 0} \frac{\gamma_i^u \bigl( x \bigr) - x} u \]
is $\hat{\bar A}_\infty$-linear, see for example, \cite[Remark 4.12]{liu2022rhamprismaticcrystalsmathcalok}. 
Now, $\overline A_N\subset \hat{\bar A}_\infty$ is equipped with the spectral norm with unit ball $\overline A_N ^+$. As $\overline M_N$ is the direct summand of some $\overline A_N^{\oplus k},$ we get a norm on $\overline M_N,$ hence also on
$\End_{\overline A_N} \bigl( \overline M_N \bigr)$.

Consider the restriction
\[ \varphi_i = \bigl. \gamma_i^N \bigr|_{\overline M_N} \in \Aut_{\overline A_N} \bigl(\overline M_N \bigr). \]
It follows that $\varphi_i^{p^m} \to I, m \to \infty.$ Take $m \in \mathbb Z_{\geq 1}$ large enough such that for all $i$ we have
\[ \bigl\| \varphi_i^{p^m} - I \bigr\| < p^{-1/( p-1 )}. \]
It follows that $\bigl. \vartheta_i \bigr|_{\overline M_N} = \log \varphi_i^{p^m}$ and
\[ \varphi_i^{p^m} - I = \exp \bigl( \bigl. \vartheta_i \bigr|_{\overline M_N} \bigr) - I. \]
It follows that the $\Gamma$-action on $\overline M_N$ is quasi-unipotent if and only if the restriction of $\vartheta_i$ on $\overline M_N$ is nilpotent. This proves part (b).
\end{proof}

\begin{cor}\label{analytic:nilpotent}
Let $\overline{\mathbb L}$ be a nilpotent $\widehat{\mathcal O}_{\overline X}$-bundle on $X_{1,v}$. Then for any affinoid perfectoid $U$ over $X_{\infty}$, there exists a finite projective $\overline A$-module $\overline M$ equipped with a unipotent linear $\Gamma$-action, such that
\[ \overline{\mathbb L} \bigl( U \bigr) = \overline M \otimes_{\overline A} \widehat{\mathcal O}_{\overline X} \bigl( U \bigr). \]
Moreover,
\[H^i\bigl(\Gamma, \overline{\mathbb L}(X_\infty)\bigr)\cong H^i\bigl(\Gamma, \overline M).\]
\end{cor}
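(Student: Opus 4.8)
The plan is to descend the finite projective $\overline A_\infty$-module $\overline M_\infty$ (and its semilinear $\Gamma$-action) from Lemma \ref{analytic} down to a finite projective $\overline A$-module $\overline M$ with a \emph{unipotent linear} $\Gamma$-action, and then to identify the Galois cohomology of $\overline{\mathbb L}(X_\infty)$ with that of $\overline M$. First I would invoke part (b) of Lemma \ref{analytic}: since $\overline{\mathbb L}$ is nilpotent, the action of $\Gamma$ on $\overline M_N$ is quasi-unipotent, equivalently each $\vartheta_i|_{\overline M_N}$ is nilpotent. The key point is that quasi-unipotence plus nilpotence of the Sen operators forces the $\Gamma$-action to actually \emph{decomplete all the way to level $0$}: concretely, the operators $\log\gamma_i$ are then well-defined nilpotent $\overline A_N$-linear endomorphisms, they pairwise commute (as $\Gamma\cong\mathbb Z_p^d$ is abelian and the $\vartheta_i$ arise from a single compatible family), and one can form $\overline M:=\ker$ of the descent datum, or more directly use the decompletion system of Proposition \ref{de} at level $\alpha=1$ (i.e.\ mod $t$, which over $\overline X$ is just the trivial statement) to get a finite projective $\overline A$-module $\overline M_1=\overline M$ with $\overline M_\infty\cong\overline M\otimes_{\overline A}\overline A_\infty$ on which $\Gamma$ acts; the semilinearity becomes linearity because $\Gamma$ acts trivially on $\overline A=\overline A_1$ (the toric coordinates $T_i$ are $\Gamma$-fixed mod $t$). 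Unipotence of this linear action follows because $\gamma_i$ acts on $\overline M$ as $\exp(\log\gamma_i)$ with $\log\gamma_i$ nilpotent and $\overline A$-linear.

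Then the base-change statement $\overline{\mathbb L}(U)=\overline M\otimes_{\overline A}\widehat{\mathcal O}_{\overline X}(U)$ for affinoid perfectoid $U$ over $X_\infty$ follows by composing the identification $\overline{\mathbb L}(U)=\hat{\bar M}_\infty\otimes_{\hat{\bar A}_\infty}\widehat{\mathcal O}_{\overline X}(U)$ from Proposition \ref{kl} with $\hat{\bar M}_\infty\cong\overline M_\infty\otimes_{\overline A_\infty}\hat{\bar A}_\infty\cong\overline M\otimes_{\overline A}\hat{\bar A}_\infty$ and the flatness of $\widehat{\mathcal O}_{\overline X}(U)$ over $\hat{\bar A}_\infty$.

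For the cohomology comparison, I would first reduce $H^i(\Gamma,\overline{\mathbb L}(X_\infty))$ to $H^i(\Gamma,\hat{\bar M}_\infty)$, then to $H^i(\Gamma,\overline M_\infty)$, then to $H^i(\Gamma,\overline M)$. The first reduction is essentially the definition $\overline{\mathbb L}(X_\infty)=\hat{\bar M}_\infty$. The passage $\hat{\bar M}_\infty\rightsquigarrow\overline M_\infty$ is precisely the content of the strong decompletion system (Proposition \ref{de}, applied with $\alpha=\infty$ or more relevantly $\alpha=1$): decompletion is a quasi-isomorphism on continuous group cohomology, i.e.\ $R\Gamma_{\mathrm{cont}}(\Gamma,\hat{\bar M}_\infty)\simeq R\Gamma_{\mathrm{cont}}(\Gamma,\overline M_\infty)$ and likewise $\simeq R\Gamma_{\mathrm{cont}}(\Gamma,\overline M_N)$. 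Finally, to get down from $\overline M_N$ (or $\overline M_\infty$) to $\overline M=\overline M_1$ I would use that $\overline A_\infty/\overline A$ (equivalently $\overline A_N/\overline A$) is a $\Gamma$-Galois covering whose higher cohomology vanishes with coefficients in a module already defined over $\overline A$: by the projection formula / Hochschild–Serre and the almost purity computation for the pro-étale toric tower, $H^j(\Gamma^{p^N}, \overline M\otimes_{\overline A}\overline A_N) $ is concentrated in degree $0$ and equals $\overline M\otimes_{\overline A}(\overline A_N)^{\Gamma^{p^N}}=\overline M$, so the Hochschild–Serre spectral sequence for $1\to\Gamma^{p^N}\to\Gamma\to\Gamma/\Gamma^{p^N}\to1$ degenerates and yields $H^i(\Gamma,\overline M_N)\cong H^i(\Gamma/\Gamma^{p^N},\overline M)$; and since the $\Gamma$-action on $\overline M$ is unipotent (hence factors through no finite quotient in an essential way) one checks, via the Koszul/Lie-algebra complex for the commuting nilpotent operators $\log\gamma_i$, that $H^i(\Gamma/\Gamma^{p^N},\overline M)\cong H^i(\Gamma,\overline M)$ — both compute the Koszul cohomology of $(\log\gamma_1,\dots,\log\gamma_d)$ on $\overline M$. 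Chaining these isomorphisms gives $H^i(\Gamma,\overline{\mathbb L}(X_\infty))\cong H^i(\Gamma,\overline M)$.

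The main obstacle I anticipate is the bookkeeping in the last reduction: making sure the continuous group cohomology of the profinite $\Gamma\cong\mathbb Z_p(1)^d$ with coefficients in a $p$-adically-complete (indeed $t$-adically, over $B_\dR^+$, but here we are mod $t$ so $p$-adically) Banach module is correctly computed by the Koszul complex $\bigl[\overline M\xrightarrow{(\gamma_i-1)}\bigoplus_i\overline M\to\cdots\bigr]$, and that the decompletion comparison of Proposition \ref{de} is applied to the right complex; this is where one must be careful that ``unipotent'' (not merely quasi-unipotent) is genuinely available after passing to level $1$, which is exactly what part (b) of Lemma \ref{analytic} together with triviality of the $\Gamma$-action on $\overline A$ delivers. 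The geometric input — vanishing of higher cohomology of the toric pro-étale tower, i.e.\ that $\hat{\bar A}_\infty$ is, up to the decompletion system, cohomologically like $\overline A$ — is already packaged in Proposition \ref{de} and \cite[Theorem 3.5.8]{kl2}, so the argument is mostly a matter of assembling these black boxes in the correct order.
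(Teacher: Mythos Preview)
There are two genuine gaps. First, Proposition~\ref{de} only decompletes from $\hat{\bar A}_\infty$ down to some $\overline A_N$ with $N\gg 0$; it does \emph{not} produce a module over $\overline A$, so your ``more directly use the decompletion system'' does not work, and your alternative ``$\overline M:=\ker$ of the descent datum'' is not a proof as written (note also that $\log\gamma_i$ is not $\overline A_N$-linear, since $\gamma_i$ is only semilinear over $\overline A_N$; it is $\gamma_i^{p^N}$, equivalently $\vartheta_i$, that is linear). The paper fills this by a generalized-eigenspace argument: setting $\overline M_0=\{x\in\overline M_\infty:\ (\gamma_i-1)\text{ nilpotent on }x\text{ for all }i\}$, one checks that the generalized $\zeta_{p^N}^{k_i}$-eigenspace of $\gamma_i$ on $\overline M_N$ is exactly $T_1^{k_1/p^N}\cdots T_d^{k_d/p^N}\cdot\overline M_0$, so $\overline M_0\otimes_{\overline A}\overline A_N\xrightarrow{\sim}\overline M_N$ and $\overline M:=\overline M_0$ works. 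Nilpotence of the Sen operator is exactly what forces all generalized eigenvalues of $\gamma_i$ to be $p$-power roots of unity, so that these twists exhaust $\overline M_N$.

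Second, your Hochschild--Serre step is incorrect: $\Gamma^{p^N}$ acts trivially on $\overline A_N$ but only \emph{unipotently} on $\overline M$, so $H^j(\Gamma^{p^N},\overline M_N)\cong H^j(\Gamma^{p^N},\overline M)\otimes_{\overline A}\overline A_N$ is typically nonzero in every degree $0\le j\le d$, and the spectral sequence does not collapse to $H^i(\Gamma/\Gamma^{p^N},\overline M)$. The eigenspace decomposition again gives the correct route: writing $\overline M_\infty=\bigoplus_{\underline k}T^{\underline k}\cdot\overline M$ over $\underline k\in([0,1)\cap\mathbb Z[1/p])^d$, on each summand with $\underline k\neq 0$ some $\gamma_i$ acts as $\zeta\cdot(\text{unipotent})$ with $\zeta$ a nontrivial root of unity, hence $\gamma_i-1$ is invertible there and only the summand $\overline M$ contributes to $H^i(\Gamma,\overline M_\infty)$; then Proposition~\ref{de} gives $H^i(\Gamma,\hat{\bar M}_\infty)\cong H^i(\Gamma,\overline M_\infty)$.
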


\begin{proof}[Proof]
Consider the module $\overline M_\infty$ as in Lemma \ref{analytic}.

For each $N \ge 0,$ take
\[ \overline M_N = \bigl\{ x \in \overline M_\infty : \forall i, \exists m, \bigl( \gamma_i^{1/p^N} - 1 \bigr)^m x =0 \bigr\}. \]
Over $S / C$, we may consider the decomposition of $\overline M_N$ into generalised eigenspaces. Write
\[ \overline M_N = \bigoplus_{0 \le k_1,\ldots,k_d < p^N} \overline M_N^{( k_1,\ldots,k_d )}\]
where $\gamma_i^{1/p^N} - \zeta_{p^N}^{k_i}$ acts nilpotently on $\overline M_N^{( k_1,\ldots,k_d )}$. Note that
\[ \overline M_N^{( k_1,\ldots,k_d )} = T_1^{k_1/p^N} \cdots T_d^{k_d/p^N} \overline M_0.\]
Then the natural map $M_0 \otimes_{\overline A} \overline A_N \to M_N$ is an isomorphism. Taking the colimit we see that
\[ \overline M_0 \otimes_{\overline A} \overline A_\infty \to \overline M_\infty \]
is an isomorphism. Now, just take $\overline M = \overline M_0.$
\end{proof}

\subsection{The functor RH}

\begin{lem}\label{pushforward}
Assume $X$ admits a toric chart. Let $\overline{\mathbb L}$ be a nilpotent $\widehat{\mathcal O}_{\overline X}$-local system on $X_{1,v}$ and take $\overline M$ as in Corollary \ref{analytic:nilpotent}.

There is a natural isomorphism
\[ R\Gamma \bigl( X, \overline{\mathbb L} \otimes_{\widehat{\mathcal O}_{\overline X}} \gr^0 \mathcal O \mathbb B_\dR \bigr) \to \overline M. \]
\end{lem}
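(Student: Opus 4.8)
The plan is to compute $R\Gamma(X, \overline{\mathbb L}\otimes_{\widehat{\mathcal O}_{\overline X}}\gr^0\mathcal O\mathbb B_\dR)$ by first passing to the pro-\'etale covering $\overline X_\infty\to\overline X$ and then decompleting the $\Gamma$-action. Concretely, since $\overline X_\infty\to\overline X$ is a $\Gamma$-torsor with $\overline X_\infty$ affinoid perfectoid, the Cartan--Leray spectral sequence together with Proposition~\ref{coh} (vanishing of higher cohomology of $\overline{\mathbb L}\otimes\gr^0\mathcal O\mathbb B_\dR$ on affinoid perfectoids) identifies $R\Gamma(X, \overline{\mathbb L}\otimes_{\widehat{\mathcal O}_{\overline X}}\gr^0\mathcal O\mathbb B_\dR)$ with the group cohomology $R\Gamma\bigl(\Gamma,\, (\overline{\mathbb L}\otimes_{\widehat{\mathcal O}_{\overline X}}\gr^0\mathcal O\mathbb B_\dR)(X_\infty)\bigr)$. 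By Corollary~\ref{analytic:nilpotent} and the description $\gr^0\mathcal O\mathbb B_\dR\cong\dlim_n\Sym^n_{\widehat{\mathcal O}_{\overline X}}\mathcal F$ from the proof of Proposition~\ref{coh}, and using Corollary~\ref{obdr} to write $\gr^0\mathcal O\mathbb B_{\dR|X_\infty}\cong\widehat A_\infty[V_1,\dots,V_d]$ (or rather its $\gr^0$-avatar $\hat{\bar A}_\infty[V_1,\dots,V_d]$) with $\Gamma$ acting on the $V_i$ by $\gamma_j V_i = V_i - \delta_{ij}$, this becomes $R\Gamma\bigl(\Gamma,\, \overline M\otimes_{\overline A}\hat{\bar A}_\infty[V_1,\dots,V_d]\bigr)$.

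Next I would decomplete: by Corollary~\ref{analytic:nilpotent}, $\overline M\otimes_{\overline A}\hat{\bar A}_\infty = \overline{\mathbb L}(X_\infty)$ and the $\Gamma$-cohomology of $\hat{\bar A}_\infty$ against a finite projective module with unipotent action reduces to that of $\overline A$ (this is the content of the cited isomorphism $H^i(\Gamma,\overline{\mathbb L}(X_\infty))\cong H^i(\Gamma,\overline M)$, i.e. the strong decompletion of Proposition~\ref{de}). One needs the analogous statement with the extra polynomial variables $V_i$, which is handled by the same decompletion system applied degree by degree in the $V$-grading, or equivalently by noting $\Sym^n\mathcal F$ is again a $\widehat{\mathcal O}_{\overline X}$-bundle so Proposition~\ref{de} applies verbatim, and then taking the colimit over $n$ (which commutes with the finite cohomology of $\Gamma\cong\mathbb Z_p^d$ acting on these countably-generated modules). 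This yields $R\Gamma(\Gamma,\, \overline M[V_1,\dots,V_d])$ with $\gamma_j$ acting on $\overline M$ unipotently and sending $V_i\mapsto V_i-\delta_{ij}$.

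Finally, the key local computation: I would show $R\Gamma(\Gamma,\, \overline M[V_1,\dots,V_d])\simeq\overline M$ in degree $0$ with no higher cohomology. Write $\Gamma=\langle\gamma_1,\dots,\gamma_d\rangle$ and use the Koszul complex for the commuting operators $\gamma_i-1$; since the variables decouple one can induct on $d$. For a single $\gamma=\gamma_i$ acting on $\overline M[V]$ by $\gamma(m V^k) = (\gamma m)(V-1)^k$, the operator $\gamma-1$ is the composite of the unipotent $\overline A$-linear part and the shift $V\mapsto V-1$; the claim is that $\gamma-1$ is surjective with kernel a copy of $\overline M$ (the "constants" after the twist). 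This is where the real work is: one checks that $\gamma-1$ on $\overline M[V]$ is, up to the unipotent correction, the difference operator $f(V)\mapsto f(V)-f(V-1)$ tensored with $\overline M$, which on polynomials is surjective with $1$-dimensional kernel over any ring; the unipotent perturbation does not change this because it is a nilpotent lower-order term, so an inductive/filtration argument on the degree in $V$ (or on the unipotency order) produces the inverse. The main obstacle, and the step deserving the most care, is precisely this: controlling the interaction between the unipotent $\Gamma$-action on $\overline M$ and the shift action on the polynomial variables, to ensure surjectivity of the Koszul differentials and the correct ($=\overline M$, concentrated in degree $0$) cohomology — in particular checking that no spurious classes survive in the $V$-direction and that the identification is natural in $\overline{\mathbb L}$.
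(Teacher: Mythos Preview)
Your proposal is correct and follows the same approach as the paper: Cartan--Leray along $X_\infty\to X$ together with Proposition~\ref{coh} reduces the computation to continuous $\Gamma$-cohomology of $\overline M[V_1,\dots,V_d]$, which the paper then dispatches by citing \cite[Lemma~2.10]{LZ} inductively in the $V_i$ --- precisely your difference-operator/filtration argument exploiting the unipotent action on $\overline M$ and the shift on the $V_i$. The only difference is that you make the decompletion step from $\hat{\bar A}_\infty$ to $\overline A$ explicit (via Proposition~\ref{de} applied to each $\Sym^n\mathcal F$ and passage to the colimit), whereas the paper silently absorbs this into the identification $(\overline{\mathbb L}\otimes_{\widehat{\mathcal O}_{\overline X}}\gr^0\mathcal O\mathbb B_\dR)(X_\infty)\cong \overline M[V_1,\dots,V_d]$.
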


\begin{proof}
The proof goes essentially the same as in \cite[Lemma 2.9]{LZ}.

Note that the $j$-fold fibre product $X_\infty^{j/X}\simeq X_\infty\times\Gamma^{j-1}$ in $X_\proet$. Hence
\[H^i(X_\infty^{j/X},\cdot)\cong\Hom_{\mathrm{cont}}\bigl(\Gamma^{j-1}, H^i\bigl(X_\infty,\cdot\bigr)\bigr).\]

Using Proposition \ref{coh} and computing the Cartan-Leray spectral sequence for $X_\infty\to X$, we get
\begin{align*}
H^i(X, \mathbb L\otimes_{\widehat{\mathcal O}_{\overline X}} \gr^0 \obdr)&\cong H^i\bigl(\Gamma, (\mathbb L\otimes_{\widehat{\mathcal O}_{\overline X}} \gr^0 \obdr )(X_\infty)\bigr)\\{}&\cong H^i\bigl(\Gamma, M\bigl[V_1,\dots,V_d]\bigr).
\end{align*}

Compute this term by \cite[Lemma 2.10]{LZ} inductively by the fact that $\Gamma$ acts on $\overline M$ unipotently, we deduce the result.
\end{proof}

\begin{thm}\label{LS:MIC}
Let $\mathbb L$ be a nilpotent $\mathbb B_\dR^+$-local system on $X_{1,v}$. Denote
\[ \mathcal E = R\nu_* \bigl( \mathbb L\otimes_\bdr\Fil^0 \obdr  \bigr), \]
with a natural integrable $t$-connection
\[ \nabla: \mathcal E \to \mathcal E \otimes_{\mathcal O_X} \Omega_X\{-1\} \]
induced by that on $\Fil^0 \mathcal O \mathbb B_\dR$. We then have:

\begin{enumerate}[label=(\alph*)]
\item $\mathcal E \cong \nu_* \bigl( \mathbb L\otimes_\bdr\Fil^0 \obdr  \bigr)$ is an $\mathcal O_X$-bundle on $X_\et$ with a nilpotent $t$-connection $\nabla$. \\

\item The natural map
\[ \nu^{-1} \mathcal E\otimes_{\nu^{-1} \mathcal O_X} \Fil^0 \obdr  \to \mathbb L\otimes_\bdr\Fil^0 \obdr  \]
is an isomorphism between modules with connections.\\

\item The natural map
\[ \mathbb L \to \bigl( \mathbb L\otimes_\bdr\Fil^0 \obdr  \bigr)^{\nabla=0} \cong \bigl( \nu^{-1} \mathcal E\otimes_{\nu^{-1} \mathcal O_X} \Fil^0 \obdr  \bigr)^{\nabla=0} \]
is an isomorphism.
\end{enumerate}

Also, $t$ is a nonzero-divisor on $\mathcal O_X$, $\mathcal O_X$ has vanishing higher cohomology on $X$, and
\[ \mathcal O_{X_\alpha} = \mathcal O_X / t^\alpha. \]
\end{thm}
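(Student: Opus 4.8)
The plan is to prove Theorem~\ref{LS:MIC} by reduction to the toric-chart situation, where the period sheaf computations of Section~2 and the local module description of Corollary~\ref{analytic:nilpotent} are available. First I would record the general structure statements at the end: that $t$ is a nonzero-divisor on $\OO_X$ and $\OO_{X_\alpha}=\OO_X/t^\alpha$ are immediate from the definition of a smooth adic space over $B_\dR^+$ (the $X_\alpha$ are flat over $B_{\dR,\alpha}^+$ and $X_\alpha=X\times_{B_\dR^+}B_{\dR,\alpha}^+$), and the vanishing of higher cohomology of $\OO_X$ on $X$ follows by $t$-adic induction from the affinoid acyclicity of $\OO_{\overline X}$ on the (Stein, or affinoid) reduction together with the completeness statement $\OO_X=\ilim_\alpha\OO_{X_\alpha}$; one invokes that $R^1\ilim$ vanishes because the transition maps are surjective. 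These are the ``routine'' parts and I would dispatch them quickly.

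The heart of the argument is (a)--(c), and here I would work locally, assuming $X$ admits a toric chart $\psi:X\to\mathbb T^d$, since all the assertions are local on $X_\et$ and the formation of $\mathcal E=R\nu_*$ commutes with restriction to an \'etale neighborhood. On such a chart, Corollary~\ref{obdr} gives $\OO\mathbb B_{\dR|X_\infty}^{[0,\infty]}\cong \mathbb B^+_{\dR|X_\infty}\langle V_1,\dots,V_d\rangle$ with $\iota^{-1}d=-\sum_i\partial_{V_i}\otimes\frac{d\log T_i}{t}$; combined with the Cartan--Leray computation as in Lemma~\ref{pushforward} and Corollary~\ref{analytic:nilpotent}, I would compute $\nu_*(\mathbb L\otimes_{\bdr}\Fil^0\OO\mathbb B_\dR)$ as a $\Gamma$-invariant submodule. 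The key point is that, after reducing mod $t$, one is computing $H^0(\Gamma,\overline M[V_1,\dots,V_d])=\overline M^{\Gamma}$-type expressions, which by unipotence of the $\Gamma$-action and \cite[Lemma~2.10]{LZ} gives back a finite projective $A$-module; lifting this over the $t$-adic filtration (using that each $\gr^j\OO\mathbb B_\dR$ is controlled by $\gr^0$ twisted, and that the relevant $H^1$'s over $\Gamma$ vanish after mod-$t$ by Proposition~\ref{coh}) shows $\mathcal E$ is an $\OO_X$-bundle and the higher direct images vanish. This simultaneously proves the first clause of (a); nilpotence of $\nabla$ follows because $\gr^0\nabla$ is identified with the Higgs field/Sen operator of $\overline{\mathbb L}$, which is nilpotent by hypothesis and by the compatibility in Lemma~\ref{analytic}(b).

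For (b) I would observe that both sides are $\Fil^0\OO\mathbb B_\dR$-modules with connection whose formation is compatible with the toric presentation, so it suffices to check the map is an isomorphism after applying $\iota^{-1}$ and passing to $X_\infty$, where it becomes the statement that $\mathcal E(X_\infty)\otimes_{A_\infty}\widehat A_\infty\langle V\rangle\to (\mathbb L\otimes\Fil^0\OO\mathbb B_\dR)(X_\infty)$ is an isomorphism — and this is exactly the decompletion/analyticity content of Corollary~\ref{analytic:nilpotent} together with the Poincar\'e lemma of Corollary~\ref{Poincare}: the ``Taylor expansion'' in the $V_i$ identifies the local system with its horizontal sections tensored up. For (c), the first isomorphism is the relative Poincar\'e lemma (Corollary~\ref{Poincare}) applied after tensoring the exact de Rham complex of $\OO\mathbb B_\dR^{[0,\infty]}$ with the flat module $\mathbb L$, giving $(\mathbb L\otimes_{\bdr}\Fil^0\OO\mathbb B_\dR)^{\nabla=0}=\mathbb L$; the second isomorphism is then just (b). I expect the main obstacle to be the bookkeeping in (a): showing that $R^{>0}\nu_*$ vanishes and that the $H^0$ is a \emph{bundle} (not merely a module) requires carefully running the $t$-adic/filtration induction while keeping track of the $\Gamma$-cohomology of the modules $\overline M[V_1,\dots,V_d]$, i.e.\ reconciling the ``mod $t$'' picture with the full $\mathbb B_\dR^+$-picture — the unipotence of the $\Gamma$-action is what makes each inductive step finite and exact, and this is where the nilpotent hypothesis is essential.
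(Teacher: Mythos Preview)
Your proposal is correct and follows essentially the same strategy as the paper: reduce to a toric chart, use Lemma~\ref{pushforward} and Corollary~\ref{analytic:nilpotent} to identify $\mathcal E$ modulo $t$ with the finite projective $\overline A$-module $\overline M$, and then climb the $t$-adic tower; parts (b) and (c) are handled the same way (d\'evissage to $\alpha=1$ and the Poincar\'e lemma, respectively).

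One organizational difference worth noting: you treat the structural facts about $\OO_X$ (that $t$ is a nonzero-divisor, $\OO_{X_\alpha}=\OO_X/t^\alpha$, acyclicity) as preliminary input, arguing directly from flatness and Mittag--Leffler. The paper instead derives them \emph{a posteriori} by specializing the main argument to the trivial local system $\mathbb L=\mathbb B_\dR^+$: the same exact sequences $0\to M_\alpha\xrightarrow{t^\beta}M_{\alpha+\beta}\to M_\beta\to 0$ that prove freeness of $\mathcal E$ yield, for $\mathbb L=\mathbb B_\dR^+$, the sequences $0\to A_\alpha\xrightarrow{t^\beta}A_{\alpha+\beta}\to A_\beta\to 0$. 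Both are fine; the paper's route is slightly more economical in that it avoids a separate verification. Also, your description of the lifting step in (a) is a bit compressed---the paper makes explicit the mechanism (lift a basis of $\overline M$ through the surjections $M_{\alpha+1}\to M_\alpha$, then Nakayama at each level), which is exactly the ``$t$-adic induction'' you allude to.
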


\begin{proof}
Take
$\mathcal E_\alpha = R\nu_* \bigl( \mathbb L\otimes_\bdr\Fil^0 \obdr  / t^\alpha \bigr)$.

Using Lemma \ref{pushforward}, taking sheafification shows that the coherent sheaf $\mathcal E_1$ on $\overline X_\et$ defined by the datum of finite projective $\overline A$-module $\overline M$ is a vector bundle. By induction, $\mathcal E_\alpha$ is a coherent sheaf on $X_{\alpha,\et}$ for all $\alpha$.

By shrinking $X$ we may assume that $\overline M$ is free over $\overline A.$ Let $e_1,\ldots,e_r$ be a basis for $\overline M$. Since $t$ is a nonzero-divisor in $\Fil^0 \mathcal O \mathbb B_\dR$, there are exact sequences for $\alpha,\beta>0$ 
\[0\to M_\alpha\xrightarrow{t^\beta}M_{\alpha+\beta}\to M_\beta\to0\]
In particular $M_{\alpha+1} \to M_\alpha$ is surjective, so $\overline M = M_\alpha / t.$ Using these surjections, we may lift $e_i$ so that $e_i \in \ilim_\alpha M_\alpha.$ Now, by Nakayama's Lemma, $e_i$'s generate $M_\alpha$ over $A_\alpha.$ By induction, we see that the maps
\[ \bigl( e_1,\ldots,e_r \bigr): A_\alpha^{\oplus r} \to M_\alpha \]defined by
\[(f_1, \dots, f_r) \mapsto \sum _{i=1} ^r e_i f_i\]
are isomorphisms 
Taking the inverse limit, by \cite[Lemma 3.18]{S2}, we get an isomorphism
\[ \mathcal O_X^{\oplus r} \to \mathcal E, \]
proving (a). The nilpotency follows from Corollary \ref{analytic:nilpotent} as in \cite[Lemma 2.11]{LZ}.

In particular, we may take $\mathbb L = \mathbb B_\dR^+$ the trivial local system. For $\alpha,\beta>0,$ the exact triangle
\[0\to \Fil^0 \mathcal O \mathbb B_\dR / t^\alpha \xrightarrow{t^\beta}
\Fil^0 \mathcal O \mathbb B_\dR / t^{\alpha+\beta}\to\Fil^0 \mathcal O \mathbb B_\dR / t^\beta\to 0\]
will induce the exact sequence
\[0\to\mathcal O_{X_\alpha} \xrightarrow{t^\beta}\mathcal O_{X_{\alpha+\beta}} \to\mathcal O_{X_\beta}\to 0.\]
Taking the global sections, we get an exact sequence
\[0\to A_\alpha\xrightarrow{t^\beta}A_{\alpha+\beta}\to A_\beta\to0.\]
Taking the inverse limit with respect to $\alpha$, we see that there is an exact sequence
\[0\to A \xrightarrow{t^\beta}A\to A/t^\beta\to0\]
where $A = \ilim A_\alpha.$ This shows that $t$ is a nonzero-divisor on $\mathcal O_X$ and $\mathcal O_X / t^\beta = \mathcal O_{X_\beta}.$ The vanishing cohomology part follows from \cite[Lemma 3.18]{S2}.

As for (b), using a limiting argument, we only need to show that
\[ \nu^{-1} \mathcal E\otimes_{\nu^{-1} \mathcal O_X} \Fil^0 \OO\BB_{\dR,\alpha} \to \mathbb L\otimes_\bdr\Fil^0 \obdr  / t^\alpha \]
are isomorphisms. Using d\'evissage, we only need to prove this for $\alpha = 1.$ This is done in Theorem \ref{analytic}.

At last, for (c), the map
\[ \mathbb L \to \bigl( \mathbb L\otimes_\bdr\Fil^0 \obdr   \bigr)^{\nabla=0} \]
is an isomorphism by Corollary \ref{Poincare}.
\end{proof}

\subsection{The inverse functor}

\begin{lem}\label{nilpotent:connection}
Let $R$ be a ring over $\mathbb Q$ and $M$ be an $R \bigl[ x_1,\ldots,x_n \bigr]$-module with a nilpotent integrable $R$-linear connection
\[ \nabla: M \to \bigoplus_{i=1}^n M \d x_i. \]
Then the natural map
\[ M^{\nabla=0} \otimes_R R \bigl[ x_1,\ldots,x_n \bigr] \to M \]
is an isomorphism.
\end{lem}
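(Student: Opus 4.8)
The plan is to reduce to the one-variable case and induct. For $n=1$, write $\nabla = \partial_x\,\d x$ where $\partial_x\colon M\to M$ is $R$-linear, additive, and satisfies the Leibniz rule $\partial_x(fm) = f'm + f\partial_x(m)$ for $f\in R[x]$; nilpotence of $\nabla$ means that locally (in fact globally, by a standard argument since $M$ is generated by finitely many elements, each killed by a power of $\partial_x$ after multiplying by a suitable polynomial — more precisely one uses that the horizontal sections will be shown to generate) some power of $\partial_x$ kills things in a controlled way. The key point is to produce, for each $m\in M$, a horizontal lift: I would define the formal expression
\[
\sigma(m) = \sum_{k\geq 0} \frac{(-x)^k}{k!}\,\partial_x^k(m),
\]
which is a finite sum precisely because $\nabla$ is nilpotent, and check by direct computation that $\partial_x(\sigma(m)) = 0$, i.e. $\sigma(m)\in M^{\nabla=0}$, and that $\sigma(m)\equiv m$ modulo $x M$. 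This uses that $R$ is a $\mathbb Q$-algebra so the factorials are invertible. Then $\sigma$ restricted to $M^{\nabla = 0}$ (where $\partial_x$ vanishes) is the identity, so $\sigma$ is an $R$-linear projection $M\to M^{\nabla=0}$.

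Next I would show $M \cong M^{\nabla=0}\otimes_R R[x]$. Surjectivity: given $m\in M$, the element $\sigma(m)$ is horizontal and agrees with $m$ mod $xM$; iterating on $m - \sigma(m) \in xM$ and using that $\partial_x$ maps $x^jM$ into $x^{j-1}M + \dots$ in a way compatible with the filtration by powers of $x$, one expresses $m$ as an $R[x]$-combination of horizontal sections — here it is cleanest to argue that $M/xM$ is generated as an $R$-module by horizontal sections via $\sigma$, then lift through the $x$-adic filtration, noting $M = \bigcup_j$ (preimage of finitely generated pieces) or, if one prefers, that $M$ is $x$-adically separated when finitely generated with nilpotent connection. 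Injectivity/freeness: if $\sum_j m_j x^j = 0$ with $m_j$ horizontal, apply $\partial_x$ repeatedly and evaluate the $x$-adic leading terms; since $\partial_x$ sends $\sum m_j x^j$ to $\sum j\, m_j x^{j-1}$ (the $m_j$ being horizontal), and $R$ has characteristic zero, a descending induction on the top nonzero degree forces all $m_j = 0$. This shows the $m_j$ are $R[x]$-free, so the natural map is an isomorphism.

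For general $n$, I would induct on $n$, writing $R' = R[x_1,\dots,x_{n-1}]$ and viewing $M$ as an $R'[x_n]$-module with the nilpotent connection $\partial_{x_n}\,\d x_n$. Integrability ensures $\partial_{x_n}$ commutes with each $\partial_{x_i}$ ($i<n$), so $M^{\partial_{x_n}=0}$ inherits a nilpotent integrable $R$-linear connection in the variables $x_1,\dots,x_{n-1}$, and $(M^{\partial_{x_n}=0})^{\nabla'=0} = M^{\nabla=0}$. The case $n=1$ gives $M\cong M^{\partial_{x_n}=0}\otimes_{R'} R'[x_n]$ compatibly with the remaining connection, and the inductive hypothesis applied to $M^{\partial_{x_n}=0}$ over $R'$ finishes it. The main obstacle I anticipate is the bookkeeping in the surjectivity step — making precise the sense in which nilpotence of $\nabla$ lets one lift generators of $M/(x_1,\dots,x_n)M$ through the multi-variable adic filtration without circularity — but this is exactly the computation carried out in the analogous classical statement, so it should go through cleanly once the projector $\sigma$ is in hand.
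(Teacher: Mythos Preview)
Your approach is correct but takes a genuinely different route from the paper. The paper handles all $n$ variables simultaneously: injectivity comes from observing that the monomial pieces $M^{\nabla=0}x_1^{i_1}\cdots x_n^{i_n}$ lie in distinct joint generalized eigenspaces for the commuting operators $x_i\nabla_i$, and surjectivity is an induction on the total nilpotence order $k_1+\cdots+k_n$ (where $\nabla_i^{k_i+1}m=0$), using the degree-one correction $m\mapsto m-\tfrac{x_1}{k_1}\nabla_1(m)$ to lower $k_1$. You instead reduce to $n=1$ and build the explicit ``parallel transport to $0$'' projector $\sigma(m)=\sum_k \tfrac{(-x)^k}{k!}\partial_x^k(m)$; this is more conceptual and yields an explicit inverse to the natural map, at the cost of an extra induction on $n$. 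Your injectivity argument (apply $\partial_x^N$ to kill all but the top coefficient) is cleaner than the paper's eigenvalue trick.

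One point to tighten: the surjectivity argument via lifting through the $x$-adic filtration is not safe as written, since $M$ is neither assumed finitely generated nor $x$-adically separated, so the iteration you describe need not terminate or converge. But your projector already hands you the clean fix: the finite Taylor identity
\[
m \;=\; \sum_{j\geq 0} \frac{x^j}{j!}\,\sigma\bigl(\partial_x^j m\bigr),
\]
checked by expanding and collapsing $\sum_{j+k=l}\tfrac{x^j(-x)^k}{j!k!}=\tfrac{(x-x)^l}{l!}$, gives surjectivity directly with no filtration argument needed. Alternatively, your ``iterate on $m-\sigma(m)$'' idea works once you observe that $m-\sigma(m)$ is an $R[x]$-combination of the $\partial_x^k(m)$ for $k\geq 1$, each of strictly lower nilpotence degree; that is then precisely the induction the paper runs, just one variable at a time.
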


\begin{proof}
Write $\nabla = \bigl( \nabla_i \bigr)_{i=1,\dots, n}$ for the $n$ components of the connection.

We have
\[ M^{\nabla=0} \otimes_R R \bigl[ x_1,\ldots,x_n \bigr] = \bigoplus_{i_1 ,\dots, i_n \geq 0} M^{\nabla=0} x_1^{i_1} \cdots x_n^{i_n}. \]
The image of each summand has pairwise distinct eigenvalues (over $\mathbb Q$) for the commuting operators $\bigl( t_i\nabla_i \bigr)$, so the map
\[ M^{\nabla=0} \otimes_R R \bigl[ x_1,\ldots,x_n \bigr] \to M \]
is injective.

Now, denote the image by $M'$, and we will show that $M'=M.$ For $m \in M,$ denote $k_i$ the minimal non-negative integer with
\[ \nabla_i^{k_i+1} \bigl( m \bigr) =0. \]
By induction on $k_1+\cdots+k_n,$ we show that $m \in M'.$

Without loss of generality, assume $k_1>0$.
Take
\[ m_1 = \nabla_1 \bigl( m \bigr), m_2 = m - \frac{x_1}{k_1} m_1. \]
We have $\nabla_i^{k_i+1} \bigl( m_j \bigr) =0$ for $i \ge 2$ and $j=1,2.$ We have $\nabla_1^{k_1} \bigl( m_1 \bigr) =0$ and
\[ \nabla_1^{k_1} \bigl( m_2 \bigr) = \nabla_1^{k_1} \bigl( m \bigr) - \nabla_1^{k_1-1} \bigl( m_1 \bigr) - x_1\nabla_1^{k_1} \bigl( m_1 \bigr) = 0. \]
By inductive hypothesis, we get $m_1,m_2 \in M',$ hence
\[ m = \frac{x_1}{k_1} m_1 + m_2 \in M'. \qedhere \]
\end{proof}

\begin{lem}\label{quasi:unipotent}
Let $\Gamma = \langle \gamma \rangle \cong \mathbb Z_p$. Let $M$ be a $\mathbb Q_p$-module equipped with a linear $\Gamma$-action. Equip $M \bigl[ x \bigr]$ with the action
\[ \gamma(x) = x+1, \]
then $M$ is $\Gamma$-quasi-unipotent if and only if $M \bigl[ x \bigr]$ is.
\end{lem}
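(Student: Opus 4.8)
The plan is to reduce the statement about $M[x]$ with the affine action $\gamma(x) = x+1$ to the linear-algebra fact that a $\mathbb Z_p$-module with a single operator is quasi-unipotent iff some $p$-power of the operator is unipotent, then to combine the two actions. First I would observe that $M$ sits inside $M[x]$ as the $\Gamma$-stable submodule of constants, so if $M[x]$ is quasi-unipotent then so is $M$; this direction is immediate. For the converse, suppose $M$ is $\Gamma$-quasi-unipotent, so there is $n$ with $\gamma^{p^n}$ unipotent on $M$, i.e. $(\gamma^{p^n}-1)$ nilpotent on $M$. The action of $\gamma^{p^n}$ on $M[x]$ is given by $\gamma^{p^n}(x) = x + p^n$ together with the given action on coefficients; writing a general element as $\sum m_j x^j$ I would expand $\gamma^{p^n}(\sum m_j x^j) = \sum \gamma^{p^n}(m_j)(x+p^n)^j$ and show $(\gamma^{p^n}-1)$ acts on $M[x]$ through an endomorphism that is nilpotent on each finite-degree piece $M[x]_{\le N} := \bigoplus_{j \le N} M x^j$, because $(x + p^n)^j - x^j$ lowers degree while $(\gamma^{p^n}-1)$ is nilpotent on each $m_j$; a triangularity argument with respect to the degree filtration then gives that $(\gamma^{p^n}-1)$ is locally nilpotent on $M[x]$, which is precisely $\Gamma$-quasi-unipotency of $M[x]$ (note the subtle point that $M[x]$ is infinite-dimensional, so "quasi-unipotent" here means locally quasi-unipotent — every element lies in a $\Gamma$-stable submodule on which some $\gamma^{p^n}$ is unipotent — which is what the degree filtration provides).

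Alternatively, and perhaps more cleanly, I would factor the $\Gamma$-action on $M[x]$ as the composite of the "coefficient" action $\gamma_0$ (acting on $M$, trivially on $x$) and the "translation" action $\tau$ ($\gamma(x)=x+1$, trivial on $M$), which commute, so $\gamma = \gamma_0 \tau$ on $M[x]$. The translation action $\tau$ is already locally unipotent on $\mathbb Q_p[x]$ since $(\tau-1)$ lowers degree — this uses $\mathbb Q \subset \mathbb Q_p$ so that $\tau = \exp((\partial/\partial x))$ makes sense, or more elementarily just that $(\tau-1)x^j$ has degree $<j$. Since $\gamma_0$ and $\tau$ commute and $\tau$ is locally unipotent, quasi-unipotency of $\gamma_0$ on $M$ propagates to $\gamma = \gamma_0\tau$ on $M[x]$: on the finite-dimensional-over-the-base piece $M_0 \otimes \mathbb Q_p[x]_{\le N}$ for $M_0 \subset M$ a $\Gamma$-stable submodule with $\gamma_0^{p^n}$ unipotent, $\gamma^{p^n} = \gamma_0^{p^n}\tau^{p^n}$ is a product of commuting unipotents (here $\tau^{p^n}$ is still unipotent on $\mathbb Q_p[x]_{\le N}$), hence unipotent.

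The main obstacle I anticipate is purely bookkeeping: making precise the sense in which an infinite-rank module $M[x]$ is "quasi-unipotent" and checking that the degree filtration is exhausting and $\Gamma$-stable with the right finiteness, so that local nilpotence of $(\gamma^{p^n}-1)$ on each graded piece assembles to the global statement used later (in Corollary \ref{analytic:nilpotent} and Lemma \ref{pushforward}, where this lemma is applied to $\overline M[V_1,\dots,V_d]$). If the paper's convention is that $M$ is finite over its base ring, then $M[x]$ is not finite and one genuinely needs the local formulation; I would state this carefully at the outset. Iterating in $d$ variables $x_1,\dots,x_d$ is then immediate by applying the one-variable case $d$ times, since the translation actions in distinct variables commute with each other and with the coefficient action.
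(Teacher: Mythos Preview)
Your argument is correct, and both approaches you outline work. The paper itself does not give a proof at all: its entire argument for this lemma is the one-line citation ``This is clear by \cite[Lemma 2.10]{LZ}.'' So there is nothing substantive to compare on the paper's side; you have supplied a self-contained proof where the paper defers to an external reference.

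A few small remarks. Your observation that $M[x]_{\le N}$ is $\Gamma$-stable (since $\gamma(x^j)=(x+1)^j$ has degree $j$) is the key structural point, and the triangularity of $\gamma^{p^n}-1$ with respect to the degree filtration, with nilpotent diagonal blocks, is exactly the right mechanism. Your factorization $\gamma=\gamma_0\tau$ with commuting $\gamma_0,\tau$ is also valid and arguably the cleanest phrasing. Your caveat about the meaning of ``quasi-unipotent'' on the infinite-rank module $M[x]$ is well taken and matches how the lemma is actually used in Theorem~\ref{MIC:LS}, where it is applied to $\overline M_\infty[V_1,\dots,V_d]$ one variable at a time; the local (filtered-by-degree) formulation is precisely what is needed there.
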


\begin{proof}
This is clear by \cite[Lemma 2.10]{LZ}.
\end{proof}

\begin{thm}\label{MIC:LS}
Let $\mathcal E$ be a vector bundle with a nilpotent integrable $t$-connection. Then
\[ \mathbb L := \bigl(  \nu^{-1} \mathcal E\otimes_{\nu^{-1} \mathcal O_X} \Fil^0 \obdr  \bigr)^{\nabla=0} \]
is a nilpotent $\mathbb B_\dR^+$-local system on $X_{1,v}$, such that

(1) The natural map
\[ \mathbb L\otimes_\bdr\Fil^0 \obdr  \to  \nu^{-1} \mathcal E\otimes_{\nu^{-1} \mathcal O_X} \Fil^0 \obdr  \]
is an isomorphism.\\

(2) The natural map
\[ \mathcal E \to R \nu_* \bigl(  \nu^{-1} \mathcal E\otimes_{\nu^{-1} \mathcal O_X} \Fil^0 \obdr  \bigr) \cong R\nu_* \bigl( \mathbb L\otimes_\bdr\Fil^0 \obdr  \bigr) \]
is a quasi-isomorphism.

\end{thm}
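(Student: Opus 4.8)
The plan is to reduce everything to a local computation on a toric chart, and there to run a $t$-adic dévissage whose bottom layer is controlled by Lemma~\ref{nilpotent:connection}.

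\textbf{Reduction to a chart.} All three assertions are local on $X_\et$: being a $\mathbb B_\dR^+$-local system is a v-local condition, and the maps in (1) and (2) are morphisms of (complexes of) sheaves on $X_{1,v}$ whose being an isomorphism, resp.\ quasi-isomorphism, may be checked v-locally. So by Proposition~\ref{etsite:more} I may assume $X=\Spa(A,A^+)$ admits a toric chart and, after shrinking, that $\mathcal E=\mathcal O_X^{\oplus r}$ and $\nabla$ is given by commuting integrable operators $\nabla_i=\partial_{\log T_i}+N_i$ with $N_i\in\Mat_r(A)$; nilpotency (vanishing of the characteristic polynomial of the Higgs field $\overline\theta=\sum_i\overline N_i\otimes\frac{d\log T_i}{t}$) implies in particular $\overline N_i^r=0$ for all $i$. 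Passing to $X_\infty$ and applying Corollary~\ref{obdr}, the sheaf $\nu^{-1}\mathcal E\otimes_{\nu^{-1}\mathcal O_X}\Fil^0\obdr$ restricts to the free module $(\Fil^0\obdr|_{X_\infty})^{\oplus r}\cong(\bdr|_{X_\infty}\langle V_1,\dots,V_d\rangle)^{\oplus r}$ with the connection $\nabla\otimes 1+1\otimes d$, where $d=-\sum_i\partial_{V_i}\otimes\frac{d\log T_i}{t}$ and the $\mathcal O_X$-structure is the one of Corollary~\ref{obdr} (so $T_i\mapsto[T_i^\flat]e^{-tV_i}$).

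\textbf{The core computation.} I claim that $\mathbb L|_{X_\infty}$ is finite free over $\bdr|_{X_\infty}$ of rank $r$ and that the natural map $\mathbb L\otimes_\bdr\Fil^0\obdr\to\nu^{-1}\mathcal E\otimes_{\nu^{-1}\mathcal O_X}\Fil^0\obdr$ is an isomorphism over $X_\infty$. Since $\Fil^0\obdr|_{X_\infty}$ is $t$-torsion-free and $t$-adically complete with $\Fil^0\obdr/t\cong\gr^0\obdr$, I would prove this by induction on $\alpha$ for $\Fil^0\obdr/t^\alpha$ and then take $\varprojlim_\alpha$ (using \cite[Lemma 3.18]{S2}). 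For $\alpha=1$: by the computation in the proof of Proposition~\ref{coh}, $\gr^0\obdr|_{X_\infty}\cong\widehat{\mathcal O}_{\overline X}|_{X_\infty}[V_1,\dots,V_d]$ is a \emph{polynomial} algebra on which $d$ is $\widehat{\mathcal O}_{\overline X}$-linear, so $\overline{\mathcal E}|_{X_\infty}\otimes_{\widehat{\mathcal O}_{\overline X}}\gr^0\obdr$ is a module over $R[V_1,\dots,V_d]$ (with $R=\widehat{\mathcal O}_{\overline X}(X_\infty)$, or $\overline A$ via Corollary~\ref{analytic:nilpotent}) carrying the nilpotent integrable $R$-linear connection whose $i$-th component is $\overline N_i-\partial_{V_i}$; Lemma~\ref{nilpotent:connection} then says its $\nabla$-invariants form a finite projective $R$-module $\overline M$ with $\overline M\otimes_R R[V_i]\xrightarrow{\sim}\overline{\mathcal E}|_{X_\infty}\otimes\gr^0\obdr$. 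For the inductive step from $\alpha$ to $\alpha+1$ I would use the exact sequence $0\to\gr^0\obdr\xrightarrow{t^\alpha}\Fil^0\obdr/t^{\alpha+1}\to\Fil^0\obdr/t^\alpha\to0$ together with the left-exactness of $(-)^{\nabla=0}$ and the surjectivity of the transition maps on $\nabla$-invariants, which follows from the exactness of the relative de Rham complex of $\obdr$, i.e.\ from Corollary~\ref{Poincare}; a five-lemma argument then upgrades the case $\alpha=1$ to all $\alpha$.

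\textbf{Assembling the statement.} Granting the core computation: since $X_\infty\to X$ is a v-cover, (1) follows from its restriction to $X_\infty$. The invariants $\mathbb L=(\nu^{-1}\mathcal E\otimes\Fil^0\obdr)^{\nabla=0}$ inherit a semilinear $\Gamma$-action (the sheaf and its connection being $\Gamma$-equivariant), and $\mathbb L(X_\infty)$ is finite free over $\widehat A_\infty$ by the above; by Proposition~\ref{kl} the pair $(\mathbb L(X_\infty),\Gamma)$ is the datum of a $\mathbb B_\dR^+$-local system of rank $r$, which is $\mathbb L$ by $\Gamma$-descent. For nilpotency I would reduce mod $t$ and read off from the core computation a $\Gamma$-equivariant isomorphism $\overline{\mathbb L}|_{X_\infty}\otimes_{\widehat{\mathcal O}_{\overline X}}\gr^0\obdr\cong\overline{\mathcal E}|_{X_\infty}[V_1,\dots,V_d]$; on the right $\Gamma$ acts through its action on $\widehat{\mathcal O}_{\overline X}(X_\infty)$ — which is ind-quasi-unipotent, the $\gamma_i$ acting by roots of unity on the $T_j^{k/p^n}$ — together with $\gamma_i(V_j)=V_j+\delta_{ij}$, so by $d$ applications of Lemma~\ref{quasi:unipotent} it is ind-quasi-unipotent, hence so is the $\Gamma$-stable submodule $\overline{\mathbb L}|_{X_\infty}$; transporting this through the decompletion of Proposition~\ref{de}, $\Gamma$ acts quasi-unipotently on the module $\overline M_\infty$ of Lemma~\ref{analytic}, so $\overline{\mathbb L}$, and therefore $\mathbb L$, is nilpotent by Lemma~\ref{analytic}(b). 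Finally, (2) is Corollary~\ref{pr}(2) applied to the vector bundle $\mathcal E$, noting $\obdr^{[0,+\infty]}=\Fil^0\obdr$ since $\Fil^{\infty}\obdr=0$; the identification with $R\nu_*(\mathbb L\otimes_\bdr\Fil^0\obdr)$ is part (1).

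\textbf{Main difficulty.} The delicate point is the core computation — making the $t$-adic dévissage genuinely work: one has to see that $(-)^{\nabla=0}$ is not only left exact but that the transition maps stay surjective (this is exactly where the Poincaré lemma, Corollary~\ref{Poincare}, is used) and that the finite projective modules produced modulo $t^\alpha$ assemble in the limit into a finite \emph{free} $\mathbb B_\dR^+$-module over $X_\infty$. Everything downstream — the local-system structure via Proposition~\ref{kl}, nilpotency via Lemmas~\ref{quasi:unipotent} and~\ref{analytic}, and part (2) via Corollary~\ref{pr} — is then formal.
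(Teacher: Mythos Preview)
Your proposal is correct and follows the same overall architecture as the paper: reduce to a toric chart, prove (1) on $X_\infty$ by working modulo $t^\alpha$ and passing to the limit, deduce the local-system property via Proposition~\ref{kl}, obtain nilpotency from the $\Gamma$-equivariant polynomial description together with Lemma~\ref{quasi:unipotent} and the decompletion of Proposition~\ref{de}, and read off (2) from Corollary~\ref{pr}(2).

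The one substantive difference is in the core computation. You apply Lemma~\ref{nilpotent:connection} only at $\alpha=1$ and then climb by a five-lemma d\'evissage, invoking Corollary~\ref{Poincare} to get surjectivity of $\mathbb L_{\alpha+1}\to\mathbb L_\alpha$. The paper instead observes that for \emph{every} $\alpha$ one has $\Fil^0\obdr/t^\alpha\cong\mathbb B_{\dR,\alpha}^+[V_1,\dots,V_d]$ (still a polynomial ring, not a power-series ring) and that the induced connection on $\nu^{-1}\mathcal E\otimes\Fil^0\obdr/t^\alpha$ is again locally nilpotent (since it is nilpotent modulo $t$, hence nilpotent modulo $t^\alpha$ by an easy $t$-adic induction). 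Thus Lemma~\ref{nilpotent:connection} applies directly at each level, giving the isomorphism in (1) modulo $t^\alpha$ and the finite-projectivity of $\mathbb L_\alpha$ in one stroke, with no five-lemma needed. This sidesteps exactly the ``main difficulty'' you flag: the delicate point in your d\'evissage is that, to run the five-lemma, you implicitly need the top row $0\to\overline{\mathbb L}\otimes\gr^0\to\mathbb L_{\alpha+1}\otimes(\Fil^0/t^{\alpha+1})\to\mathbb L_\alpha\otimes(\Fil^0/t^\alpha)\to 0$ to be short exact, which uses flatness of $\mathbb L_{\alpha+1}$ --- something you have not yet established at that stage. This is fixable (first show $\mathbb L_{\alpha+1}$ is finite free from the short exact sequence of $\nabla$-invariants and then tensor), but the paper's direct route is cleaner.
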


\begin{proof}
Assume $X$ admits a toric chart by locality. As for (a), take $\mathcal E_\alpha = \mathcal E / t^\alpha$ and
\[ \mathbb L_\alpha = \bigl(  \nu^{-1} \mathcal E\otimes_{\nu^{-1} \mathcal O_X} \Fil^0 \OO\BB_{\dR,\alpha} \bigr)^{\nabla=0}. \]
Note that $\mathcal E_\alpha$ is an $\mathcal O_{X_\alpha}$-bundle by the last sentence of Theorem \ref{LS:MIC}.

For $U\in X_{\infty,v}$, we have isomorphisms
\[ \mathbb B_{\dR,\alpha}^+ \bigl( U \bigr) \bigl[ V_1,\ldots,V_d \bigr] \to \Fil^0 \mathcal O\mathbb B_\dR / t^\alpha \]
and
\[ \bigoplus_{i=1}^d \mathbb B_{\dR,\alpha}^+ \bigl( U \bigr) \bigl[ V_1,\ldots,V_d \bigr] \d V_i \to \Fil^0 \mathcal O \mathbb B_\dR / t^\alpha \otimes_{\nu^{-1} \mathcal O_X} t^{-1} \nu^{-1} \Omega_X \]
given by $V_i \mapsto t^{-1} \log \bigl( \bigl[ T_i^\flat \bigr]/ T_i \bigr)$ and $\d V_i \mapsto -t^{-1} \d \log T_i.$ By Lemma \ref{nilpotent:connection}, we see that there is an isomorphism
\[ \mathbb L\otimes_\bdr\Fil^0 \OO\BB_{\dR,\alpha} \to  \nu^{-1} \mathcal E\otimes_{\nu^{-1} \mathcal O_X} \Fil^0 \OO\BB_{\dR,\alpha}. \]
In particular, $\mathbb L_\alpha \bigl( U \bigr)$ is finite projective over $\mathbb B_{\dR,\alpha}^+ \bigl( U \bigr)$ by faithfully flat descent, so $\mathbb L_\alpha$ is indeed a $\mathbb B_{\dR,\alpha}^+$-local system for each $\alpha.$ Now, take the inverse limit, we get an isomorphism
\[ \mathbb L\otimes_\bdr\Fil^0 \obdr  \to  \nu^{-1} \mathcal E\otimes_{\nu^{-1} \mathcal O_X} \Fil^0 \obdr . \]

It remains to show that $\mathbb L_1 =: \overline{\mathbb L}$ is nilpotent. As above, we have an isomorphism
\[ \mathbb L_1\otimes_\bdr\Fil^0 \obdr\cong  \nu^{-1} \mathcal E\otimes_{\nu^{-1} \mathcal O_X} \Fil^0 \obdr/t \]
Let $U$ be pro-\'etale over $X_\infty$ and then this isomorphism can be written as
\[ \overline{\mathbb L} \bigl( U \bigr) \bigl[ V_1,\ldots,V_d \bigr] \cong \mathcal E_1 \bigl( U \bigr) \bigl[ V_1,\ldots,V_d \bigr]. \]
Note that this isomorphism is $\Gamma$-equivariant. Take
\[ \overline E_\infty = \dlim \mathcal E_1 \bigl( X_i \bigr), \]
then the action of $\Gamma$ on $\overline E_\infty$ is evidently quasi-unipotent, and we have a natural isomorphism for all $U$ as below
\[ \mathcal E_1 \bigl( U \bigr) \cong \overline E_\infty \otimes_{\overline A_\infty} \widehat{\mathcal O}_{\overline X} \bigl( U \bigr). \]
Now, the triple
\[ \bigl( \dlim \overline A_i \bigl[ V_1,\ldots,V_d \bigr], \hat{\bar A}_\infty \bigl[ V_1,\ldots,V_d \bigr], \Gamma \bigr) \]
is a strongly decompletion system, so there exists a unique isomorphism (where $\overline M_\infty$ is as in Lemma \ref{analytic}) of finite projective $\overline A_\infty$-modules equipped with semilinear $\Gamma$-actions
\[ \overline M_\infty \bigl[ V_1,\ldots,V_d \bigr] \cong \overline E_\infty \bigl[ V_1,\ldots, V_d \bigr]. \]
We conclude by an inductive application of Lemma \ref{quasi:unipotent}.

And (b) is clear by Corollary \ref{pr}.
\end{proof}

\begin{proof}[\textbf{Proof of Theorem \ref{thm:categorical RH}}]
Combining Theorem \ref{LS:MIC} and Theorem \ref{MIC:LS} we complete the proof. The last assertion follows from Corollary \ref{Poincare}.
\end{proof}

\bibliographystyle{alpha}
\bibliography{ref}
\end{document}